\numberwithin{equation}{section} \numberwithin{equation}{section}
\newtheorem{thm}{Theorem}[section]
\newtheorem{lem}[{thm}]{Lemma}
\newtheorem{corr}[{thm}]{Corollary}
\newtheorem{rem}{Remark}[section]{\bfseries\upshape}
\newtheorem*{prob}{Problem}
\newtheorem*{Nash}{Nash's Theorem}
\newcommand{\DOI}[1]{doi: \href{https://doi.org/#1}{#1}}
\renewcommand{\oddsidemargin}{5mm}
\title[Eigenvalue Inequalities of $\mathfrak{L}^{2}_{\nu}$ Operator ]{Eigenvalues for the Clamped Plate Problem of \\ $\mathfrak{L}^{2}_{\nu}$ Operator on Complete  Riemannian manifolds}
\author[L. Zeng ]{Lingzhong Zeng}
\address{Lingzhong Zeng
\\  \newline \indent School of Mathematics and Statistics
\\  \newline \indent   Jiangxi Normal University, Nanchang 330022,  China. lingzhongzeng@yeah.net}
\begin{document}
\maketitle

\begin{abstract} $\mathfrak{L}_{\nu}$ operator is an important extrinsic differential operator of divergence type and has profound geometric settings.  In this paper, we consider the clamped plate problem of $\mathfrak{L}^{2}_{\nu}$ operator on a bounded domain of the complete Riemannian manifolds.  A general formula  of eigenvalues of $\mathfrak{L}^{2}_{\nu}$ operator is established. Applying this general formula, we obtain some estimates for the eigenvalues with higer order on the complete Riemannian manifolds.
As several fascinating applications, we discuss this eigenvalue problem on the complete translating solitons, minimal submanifolds on the Euclidean space, submanifolds on the unit sphere and projective spaces. In particular, we get a universal inequality with respect to the $\mathcal{L}_{II}$ operator on the translating solitons.  Usually, it is very difficult to get universal inequalities for weighted Laplacian and even Laplacian on the complete Riemannian manifolds. Therefore, this work can be viewed as a new contribution to universal inequality. \end{abstract}

\footnotetext{{\it Key words and phrases}: mean curvature flows; $\mathfrak{L}_{\nu}^{2}$  operator; clamped plate problem;
eigenvalues; Riemannian manifolds; translating solitons.} \footnotetext{2010
\textit{Mathematics Subject Classification}:
 35P15, 53C40.}

\footnotetext{The research was partially supported by the National Natural
Science Foundation of China (Grant Nos. 11861036 and 11826213) and   Natural Science Foundation of Jiangxi Province (Grant No. 20171ACB21023).}

\section{Introduction}
Suppose that $\mathcal{M}^{n}$ is an $n$-dimensional, complete Riemannian manifold $\mathcal{M}^{n}$ and $\Omega$ is a bounded domain with piecewise smooth boundary $\partial\Omega$. We consider  the fixed membrane problem of Laplacian on $\Omega\subset\mathcal{M}^{n}$:

\begin{equation}\label{Laplace-prob} {\begin{cases} \
 \Delta u  =-\lambda u , \ \ & {\rm in} \ \ \ \ \Omega, \\
  \ u=0, \ \ & {\rm on} \ \ \partial \Omega,
\end{cases}}\end{equation}where $\Delta$ denotes the Laplacian on the Riemannian manifold  $\mathcal{M}^{n}$.
Let $\lambda_{k}$ denote the $k^{th}$ eigenvalue, and then the spectrum of the eigenvalue problem \eqref{Laplace-prob} is
discrete and satisfies
\begin{equation*}
0<\lambda_{1}\leq\lambda_{2}\leq\cdots\leq\lambda_{k}\leq\cdots\rightarrow+\infty,
\end{equation*}
where each eigenvalue is repeated under counting its multiplicity.
When $\mathcal{M}^{n}$ is an $n$-dimensional Euclidean space $\mathbb{R}^{n}$, Payne,
P\'{o}lya and Weinberger \cite{PPW2} studied Dirichlet eigenvalue problem \eqref{Laplace-prob}, and obtained a universal inequality as follows:

\begin{equation}\label{ppw-ineq}\lambda_{k+1}-\lambda_{k}\leq\frac{4}{nk}\sum^{k}_{i=1}\lambda_{i}.\end{equation} Here, the words ``universal" means that eigenvalue inequalities are independent of the domain.
Furthermore, in various settings, many mathematicians extended the universal inequality given by Payne, P\'{o}lya and Weinberger.  In particular, Hile and Protter \cite{HP} proved the following universal inequality of eigenvalues:

\begin{equation}\label{hp-ineq}\sum^{k}_{i=1}\frac{\lambda_{i}}{\lambda_{k+1}-\lambda_{i}}\geq\frac{nk}{4},\end{equation}
which is sharper than eigenvalues inequality \eqref{ppw-ineq}. Furthermore, an amazing contribution to eigenvalue inequality is due to Yang \cite{Y} (cf. \cite{CY2}). He obtained a very sharp universal inequality:

\begin{equation}\label{y1-ineq}\sum^{k}_{i=1}(\lambda_{k+1}-\lambda_{i})^{2}\leq\frac{4}{n}\sum^{k}_{i=1}(\lambda_{k+1}-\lambda_{i})\lambda_{i}.\end{equation}
From \eqref{y1-ineq}, one can infer that

\begin{equation}\label{y2-ineq}\lambda_{k+1}\leq\frac{1}{k}(1+\frac{4}{n})\sum^{k}_{i=1}\lambda_{i}.\end{equation}
The inequalities \eqref{y1-ineq} and \eqref{y2-ineq} are called by Ashbaugh Yang's first inequality and second inequality,
respectively (cf. \cite{A1}, \cite{A2}). Indeed, according to Chebyshev's inequality, we have the following connections
$$\eqref{y1-ineq}\Rightarrow \eqref{y2-ineq} \Rightarrow \eqref{hp-ineq} \Rightarrow \eqref{ppw-ineq}.$$

To describe vibrations of a clamped plate in elastic mechanics, one usually
consider the following Dirichlet eigenvalue problem for biharmonic operator:

\begin{equation}\label{clamped}\left\{\begin{array}{l}
\Delta^{2} u=\Lambda u, \quad \text { in } \Omega, \\
u=\frac{\partial u}{\partial \textbf{n}}=0,\quad \text { on }\partial \Omega,
\end{array}\right.\end{equation}
where $\Delta$ is the Laplacian in $\mathbb{R} ^{n}$ and $\Delta^{2}$ is the biharmonic operator in $\mathbb{R} ^{n}$, and this eigenvalue problem is called a clamped
plate problem.
For this clamped plate problem, in $1956,$ Payne, P\'{o}lya and Weinberger \cite{PPW2} established an eigenvalue inequality. They obtained a universal inequality as follows:
\begin{equation}\label{PPW-ine-2}
\Lambda_{k+1}-\Lambda_{k} \leq \frac{8(n+2)}{n^{2}} \frac{1}{k} \sum_{i=1}^{k} \Lambda_{i}.
\end{equation}
As a generalization of their result, in $1984,$ Hile and Yeh \cite{HY} proved
\begin{equation}
\sum_{i=1}^{k} \frac{\Lambda_{i}^{\frac{1}{2}}}{\Lambda_{k+1}-\Lambda_{i}} \geq \frac{n^{2} k^{3 / 2}}{8(n+2)}\left(\sum_{i=1}^{k} \Lambda_{i}\right)^{-\frac{1}{2}},
\end{equation}
by making use of an improved method of Hile and Protter \cite{HP}. Furthermore, in
$1990,$ Hook \cite{Hook} Chen and Qian \cite{CQian} proved, independently, the following inequality:

\begin{equation}\label{CQH-ineq}
\frac{n^{2} k^{2}}{8(n+2)} \leq\left[\sum_{i=1} \frac{\Lambda_{i}^{\frac{1}{2}}}{\Lambda_{k+1}-\Lambda_{i}}\right] \sum_{i=1}^{k} \Lambda_{i}^{\frac{1}{2}}.
\end{equation}
Recently, in \cite{A1}, a survey paper on recent developments of eigenvalue problems, Ashbaugh pointed out whether one can establish inequalities for eigenvalues of the vibrating clamped plate problem which are analogous inequalities of Yang in the case of the eigenvalue problem of the Laplacian with Dirichlet boundary condition. In \cite{CY3}, Cheng and Yang gave an affirmative answer to the problem introduced by Ashbaugh. This is to say that they obtain the following universal inequality:
\begin{equation}\Lambda_{k+1}-\frac{1}{k} \sum_{i=1}^{k} \Lambda_{i} \leq\left[\frac{8(n+2)}{n^{2}}\right]^{\frac{1}{2}} \frac{1}{k} \sum_{i=1}^{k}\left[\Lambda_{i}\left(\Lambda_{k+1}-\Lambda_{i}\right)\right]^{\frac{1}{2}},\end{equation}which is sharper than
\begin{equation}\label{1.11-ine}
\Lambda_{k+1} \leq\left[1+\frac{8(n+2)}{n^{2}}\right] \frac{1}{k} \sum_{i=1}^{k} \Lambda_{i}.
\end{equation}
It is easy to see that inequality \eqref{1.11-ine} is better than inequality \eqref{PPW-ine-2} of Payne, P\'{o}lya and Weinberger. We shall also discuss the relation between inequality \eqref{1.11-ine} and inequality \eqref{CQH-ineq} introduced by Hook \cite{Hook}, and Chen and Qian \cite{CQian} in the Remark 2 of Section 2.

Assume that $X : \mathcal{M}^{n}\rightarrow \mathbb{R}^{n+p}$ is an isometric immersion from an $n$-dimensional, oriented, complete
Riemannian manifold $\mathcal{M}^{n}$ to the Euclidean space $\mathbb{R}^{n+p}$, $\left\{e_{1}, \ldots, e_{n}\right\}$ is a local orthonormal basis of $\mathcal{M}^{n}$ with respect to the induced metric, and $\{e_{n+1}, \ldots, e_{n+p}\}$ is the local unit orthonormal normal vector fields. Assume that $$\textbf{H} =\frac{1}{n}\sum_{\alpha=n+1}^{n+p} H^{\alpha} e_{\alpha}=\frac{1}{n}\sum_{\alpha=n+1}^{n+p}\left(\sum_{i=1}^{n} h_{i i}^{\alpha}\right) e_{\alpha}$$ is the mean curvature vector field, and

$$H=|\textbf{H}|=\frac{1}{n} \sqrt{\sum_{\alpha=n+1}^{n+p}\left(\sum_{i=1}^{n} h_{i i}^{\alpha}\right)^{2}}$$ is the mean curvature of $\mathcal{M}^{n}$ throughout this paper. Let us use $\Pi$ to denote the set of all isometric immersions from $\mathcal{M}^{n}$ into the Euclidean space $\mathbb{R}^{n+p}$.  Wang and Xia \cite{WX3} proved the following inequality:

\begin{equation}\begin{aligned}
\label{WX-ineq}\sum_{i=1}^{k}\left(\Lambda_{k+1}-\Lambda_{i}\right)^{2} \leq & \frac{4}{n}\left\{\sum_{i=1}^{k}\left(\Lambda_{k+1}-\Lambda_{i}\right)^{2}\left[\left( \frac{n}{2}+1\right) \Lambda_{i}^{\frac{1}{2}}+C_{0}\right]\right\}^{\frac{1}{2}} \\
& \times\left\{\sum_{i=1}^{k}\left(\Lambda_{k+1}-\Lambda_{i}\right)\left(\Lambda_{i}^{\frac{1}{2}}+C_{0}\right)\right\}^{\frac{1}{2}},
\end{aligned}\end{equation}where $$C_{0}=\frac{1}{4}\inf _{\sigma \in \Pi}\max_{\Omega}\left(n^{2}H^{2}\right).$$Here, we remark that inequality \eqref{WX-ineq} is not a universal bound since it contains mean curvature terms depending on the bounded domain $\Omega$. However, when $\mathcal{M}^{n}$  is an $n$-dimensional minimal submanifolds on a unit sphere, then we deduce to a universal inequality obtained by Wang and Xia in \cite{WX1}.

Let $X : \mathcal{M}^{n}\rightarrow \mathbb{R}^{n+p}$ be an isometric immersion from an $n$-dimensional, oriented, complete
Riemannian manifold $\mathcal{M}^{n}$ to the Euclidean space $\mathbb{R}^{n+p}$.  We consider a smooth family of immersions
$X_{t} = X(\cdot,t):\mathcal{M}^{n}\rightarrow \mathbb{R}^{n+p}$ with corresponding images $\mathcal{M}^{n}_{t} = X_{t}(\mathcal{M}^{n})$ such that the following mean curvature equation system \cite{H}:

\begin{equation}\label{MCF-Equa}{\begin{cases}
&\frac{d}{dt}X(x,t)=\textbf{H}(x,t), x\in \mathcal{M}^{n},   \\
& X(\cdot,0) = X(\cdot),
\end{cases}}\end{equation}
is satisfied, where $\textbf{H}(x,t)$ is the mean curvature vector of $\mathcal{M}_{t}$ at $X(x, t)$ in $\mathbb{R}^{n+p}$. We let $\nu_{0}$ be a constant vector with is a constant vector with unit length in $\mathbb{R}^{n+p}$.
A submanifold $X:\mathcal{M}^{n}\rightarrow\mathbb{R}^{n+p}$ is said to be a translating soliton of the mean curvature flow \eqref{MCF-Equa}, if it
satisfies

\begin{equation}\label{tran} \textbf{H}=\nu_{0}^{\perp},\end{equation} where $\nu_{0}^{\perp}$ denotes the normal projection
of $\nu_{0}$ to the normal bundle of $\mathcal{M}^{n}$ in $\mathbb{R}^{n+p}$.
Translating solitons are not only special solutions to the mean curvature
flow equations \eqref{MCF-Equa}, but they often occur as Type-II singularity of a mean curvature flow, which plays an important role in the study
of the mean curvature flow \cite{AV}.
In \cite{Xin2}, Xin studied some important properties of translating solitons: the volume growth, generalized maximum principle, Gauss maps and certain functions related to the Gauss maps. Meanwhile, he carried out point-wise estimates and integral estimates for the squared norm of the second fundamental form. According to these estimates,  Xin proved some rigidity theorems for translating solitons in the Euclidean space with higher codimension. In addition, by using a new Omori-Yau maximal principle, Chen and Qiu \cite{ChQ} established a nonexistence theorem for the
translating solitons in the setting of spacelike.  Let $\nu\in\mathbb{R}^{n+p}$ be a constant vector filed defined on the Riemannian manifold $\mathcal{M}^{n}$. Throughout this paper, we use $\langle\cdot,\cdot\rangle_{g}$, $|\cdot|_{g}^{2}$, ${\rm div}$, $\Delta$, $\nabla$ and $\nu^{\top}$ to denote the Riemannian inner product with respect to the induced metric $g$, norm associated with the inner product $\langle\cdot,\cdot\rangle_{g}$,  divergence, Laplacian, the gradient operator on Riemannian manifolds $\mathcal{M}^{n}$ and the projection of the vector $\nu$ on the tangent bundle of $\mathcal{M}^{n}$, respectively. Similarly, we use $\langle\cdot,\cdot\rangle_{g_{0}}$, $|\cdot|_{g_{0}}^{2}$, to denote the Euclidean inner product with respect to the metric $g_{0}$ and the Euclidean norm associated with the inner product $\langle\cdot,\cdot\rangle_{g_{0}}$ on the Euclidean space $\mathbb{R}^{n+p}$, respectively.
Then, we define a new elliptic operator on the Riemannian manifolds $\mathcal{M}^{n}$ as follows:

\begin{equation} \label{L-v} \mathfrak{L}_{\nu}(\cdot) =\Delta(\cdot)+ \langle\nu,\nabla(\cdot)\rangle_{g_{0}}=e^{-\langle\nu,X\rangle_{g_{0}}}{\rm div}(e^{\langle\nu,X\rangle_{g_{0}}}\nabla(\cdot)),\end{equation}which is  introduced by Xin
in \cite{Xin2} and similar to the $\mathfrak{L}$ operator introduced by  Colding and Minicozzi in \cite{CM}.
We remark that the constant vector $\nu$ given by \eqref{L-v} do not necessary satisfies \eqref{tran}. In particular,  $\mathcal{L}_{\nu_{0}}$ operator will be denoted by $\mathcal{L}_{II}$ to agree with the notation appeared in \cite{Xin2}.
 It can be shown that the elliptic differential operator $\mathfrak{L}_{\nu}$ is a self-adjoint operator with
respect to the weighted measure $e^{\langle\nu,X\rangle_{g_{0}}}dv$.

In this paper, we investigate  the following clamped plate problem of $\mathfrak{L}_{\nu}^{2}$ operator on complete Riemannian manifold  $\mathcal{M}^{n}$:

\begin{equation}\label{L-2-prob} {\begin{cases} \
 \mathfrak{L}_{\nu}^{2}u  =\Lambda u , \ \ & {\rm in} \ \ \ \ \Omega, \\
  \ u=\frac{\partial u}{\partial \textbf{n}}=0, \ \ & {\rm on} \ \ \partial \Omega,
\end{cases}}\end{equation}where $\textbf{n}$ denotes the normal vector to the boundary $\partial\Omega$. In \cite{Z}, the author studied the above eigenvalue problem, and esatblished some inequalities of eigenvalues
with lower order.
In general, it is very difficult to establish the universal eigenvalue inequalities of weighted Laplacian on the complete Riemannian manifolds.  Even for Laplacian, some mathematicians obtained some universal inequalities only for minimal submanifolds on the Euclidean spaces, unit spheres, projective spaces, hyperbolic spaces, homogeneous Riemannian manifolds and so on. For example, see \cite{Bran,CZ,CY1,CY3,HP,HY,Hook,LP,PPW2,WX1,WX2,WX3,Z} and references therein. Therefore,
it is natural to ask the following problem.

\begin{prob}Under what condition the eigenvalue inequalities of $\mathcal{L}^{2}_{\nu}$ operator on
the complete Riemannian manifolds  do not
depend on the mean curvature? Furthermore, can we establish a universal inequality
of Yang type for eigenvalue problem \eqref{L-2-prob} under such a condition?\end{prob}
Inspired by previous research works, we continue to study eigenvalues of eigenvalue
problem \eqref{L-2-prob} of $\mathcal{L}^{2}_{\nu}$ operator on a bounded domain in an $n$-dimensional complete Riemannian manifold $\mathcal{M}^{n}$.
However, we only focus on the eigenvalues with higher order in this paper. Assume that $\mathcal{M}^{n}$ is a translating soliton and $\nu$ is unit vector field, we give an affirmative answer to this problem. In order to establish a universal bound for the $\mathcal{L}^{2}_{II}$ operator on the translating soliton, we need to prove the following theorem.
\begin{thm}\label{thm1.1}
Let $(\mathcal{M}^{n},g)$ be an $n$-dimensional complete Riemannian manifold isometrically embedded into the Euclidean space $\mathbb{R}^{n+p}$ with mean curvature $H$, then
eigenvalues $\Lambda
_{i}$ of the clamped plate problem \eqref{L-2-prob} of  the $\mathfrak{L}_{\nu}^{2}$ operator satisfy
\begin{equation}\label{thm-1.1-ineq}\begin{aligned}
\sum_{i=1}^{k}\left(\Lambda_{k+1}-\Lambda_{i}\right)^{2}
\leq & \frac{4}{n}\left\{\sum_{i=1}^{k}\left(\Lambda_{k+1}-\Lambda_{i}\right)^{2}\left(\left(\frac{n}{2}+1\right) \Lambda_{i}^{\frac{1}{2}}+4\widetilde{C}_{1}\Lambda_{1}^{\frac{1}{4}}+4\widetilde{C}_{1}^{2}+C_{1}\right)\right\}^{\frac{1}{2}} \\
& \times\left\{\sum_{i=1}^{k}\left(\Lambda_{k+1}-\Lambda_{i}\right)\left(\Lambda_{i}^{\frac{1}{2}}+4\widetilde{C}_{1}\Lambda_{1}^{\frac{1}{4}}+4\widetilde{C}_{1}^{2}+C_{1}\right)\right\}^{\frac{1}{2}},
\end{aligned}\end{equation}
where $C_{1}$ is a constant given by
$$
C_{1}=\frac{1}{4}\inf _{\sigma \in \Pi}\max_{\Omega}\left(n^{2}H^{2}\right),
$$and $\widetilde{C}_{1}$ is given by $$\widetilde{C}_{1}=\frac{1}{4}\max_{\Omega} |\nu^{\top}|_{g_{0}}.$$
\end{thm}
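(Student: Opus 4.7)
\medskip

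\noindent\textbf{Proof Proposal.} The plan is to adapt the Cheng--Yang/Wang--Xia trial-function argument used for the clamped plate problem to the weighted setting of $\mathfrak{L}_{\nu}^{2}$. Let $\{u_i\}_{i\geq 1}$ be an orthonormal basis of eigenfunctions of \eqref{L-2-prob} with respect to the weighted inner product $\langle f,g\rangle_{\nu}:=\int_{\Omega}fg\,e^{\langle\nu,X\rangle_{g_0}}dv$, so that $\mathfrak{L}_{\nu}^{2}u_i=\Lambda_i u_i$. Using the components $x_{\alpha}$ ($\alpha=1,\dots,n+p$) of the position vector of the isometric embedding $X$ as multipliers, I set
$$
\varphi_{i\alpha}:=x_{\alpha}u_i-\sum_{j=1}^{k}r_{ij}^{\alpha}u_j,\qquad r_{ij}^{\alpha}:=\langle x_{\alpha}u_i,u_j\rangle_{\nu},
$$
so that $\varphi_{i\alpha}\perp_{\nu}u_1,\dots,u_k$ and the same boundary conditions as in \eqref{L-2-prob} hold. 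The Rayleigh--Ritz characterization then gives
$$
\Lambda_{k+1}\,\|\varphi_{i\alpha}\|_{\nu}^{2}\leq \langle \mathfrak{L}_{\nu}^{2}\varphi_{i\alpha},\varphi_{i\alpha}\rangle_{\nu}.
$$

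The next step is a commutator computation. Because $\mathfrak{L}_{\nu}=\Delta+\langle\nu,\nabla\cdot\rangle_{g_0}$ is a first-order perturbation of $\Delta$, for any smooth function $f$ one has
$$
\mathfrak{L}_{\nu}(x_{\alpha}f)=x_{\alpha}\mathfrak{L}_{\nu}f+2\langle\nabla x_{\alpha},\nabla f\rangle_{g}+(\mathfrak{L}_{\nu}x_{\alpha})f.
$$
Iterating once more yields $\mathfrak{L}_{\nu}^{2}(x_{\alpha}u_i)=x_{\alpha}\Lambda_i u_i+R_{i\alpha}$, where the remainder $R_{i\alpha}$ is a first/second order expression in $u_i$ whose coefficients involve $\Delta x_{\alpha}$ (producing $n^{2}H^{2}$ when summed over $\alpha$) and $\langle\nu,\nabla x_{\alpha}\rangle_{g_0}$ (producing $|\nu^{\top}|_{g_0}^{2}$ when summed). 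After integration by parts against $\varphi_{i\alpha}$ in the weighted inner product, the Rayleigh--Ritz inequality becomes
$$
(\Lambda_{k+1}-\Lambda_i)\,\|\varphi_{i\alpha}\|_{\nu}^{2}\leq \langle R_{i\alpha},\varphi_{i\alpha}\rangle_{\nu}.
$$
I would then sum over $\alpha$ using the standard identities for an isometric immersion, namely $\sum_{\alpha}|\nabla x_{\alpha}|_{g}^{2}=n$, $\sum_{\alpha}(\Delta x_{\alpha})^{2}=n^{2}H^{2}$, and $\sum_{\alpha}\langle\nu,\nabla x_{\alpha}\rangle_{g_0}^{2}=|\nu^{\top}|_{g_0}^{2}$, together with the pointwise bounds defining $C_1$ and $\widetilde{C}_1$.

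At this point the argument is reduced to a telescoping Cauchy--Schwarz procedure. Multiplying the summed inequality by $(\Lambda_{k+1}-\Lambda_i)$, summing over $i=1,\dots,k$, and estimating the right-hand side by repeated Cauchy--Schwarz produces factors of the form $\|\nabla u_i\|_{\nu}^{2}$ and $\|\mathfrak{L}_{\nu}u_i\|_{\nu}^{2}$; using the spectral bounds $\|\mathfrak{L}_{\nu}u_i\|_{\nu}^{2}=\Lambda_i$ and $\|\nabla u_i\|_{\nu}^{2}\leq \Lambda_i^{1/2}$ (the latter coming from the Rayleigh quotient on eigenfunctions of $\mathfrak{L}_{\nu}^{2}$, with $\Lambda_1^{1/4}$ entering through the ground-state lower bound when a Schwarz inequality is split), one collects precisely the coefficients $\tfrac{n}{2}+1$, $4\widetilde{C}_1\Lambda_1^{1/4}$, $4\widetilde{C}_1^{2}$, and $C_1$ appearing in \eqref{thm-1.1-ineq}.

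The main obstacle will be the bookkeeping in the second commutator step: because $\mathfrak{L}_{\nu}$ has a drift, $\mathfrak{L}_{\nu}^{2}$ generates cross terms of the form $\langle\nu,\nabla x_{\alpha}\rangle_{g_0}\langle\nu,\nabla u_i\rangle_{g_0}$ and $\langle\nu,\nabla\langle\nabla x_{\alpha},\nabla u_i\rangle\rangle_{g_0}$ which do not appear in the pure-Laplacian Wang--Xia argument \eqref{WX-ineq}. Controlling these terms uniformly by $\widetilde{C}_1$ while keeping the sharp power $\Lambda_1^{1/4}$ (rather than a weaker $\Lambda_i^{1/4}$) is the delicate point; it requires choosing the pairing in the Cauchy--Schwarz step so that the ground-state eigenvalue absorbs the derivative of $u_i$ rather than that of $x_{\alpha}$. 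Once this is handled, the final inequality follows by the same Yang-type regrouping used in \cite{CY3} and \cite{WX3}.
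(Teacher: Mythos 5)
Your overall plan is the same one the paper uses: trial functions $\varphi_{i\alpha}=x_{\alpha}u_i-\sum_j a_{ij}^{\alpha}u_j$ built from the coordinate functions of the Nash embedding, Rayleigh--Ritz applied with the weighted inner product, a commutator expansion of $\mathfrak{L}_{\nu}^{2}(x_{\alpha}u_i)$, summation over $\alpha$ using the immersion identities $\sum_{\alpha}|\nabla x_{\alpha}|_{g}^{2}=n$, $\sum_{\alpha}(\Delta x_{\alpha})^{2}=n^{2}H^{2}$, $\sum_{\alpha}\langle\nu,\nabla x_{\alpha}\rangle_{g_0}^{2}=|\nu^{\top}|_{g_0}^{2}$, and a Yang-type Cauchy--Schwarz regrouping with a free parameter $\delta$ that is optimized at the end. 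The paper organizes this by first proving a general formula (its Lemma 2.1) valid for an arbitrary multiplier $f$, and only then substituting $f=x_{\alpha}$ and summing; you substitute $x_{\alpha}$ from the outset. That is a presentational rather than a mathematical difference, so your route is essentially the same.

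One warning about the ``delicate point'' you flag at the end. After summing over $\alpha$, the cross term $4\int_{\Omega}u_i\,\mathfrak{L}_{\nu}x_{\alpha}\langle\nabla x_{\alpha},\nabla u_i\rangle_{g}\,e^{\langle\nu,X\rangle_{g_0}}dv$ is controlled by
\[
4\left(\int_{\Omega}u_i^{2}|\nu^{\top}|_{g_0}^{2}e^{\langle\nu,X\rangle_{g_0}}dv\right)^{1/2}\left(\int_{\Omega}|\nabla u_i|_{g}^{2}e^{\langle\nu,X\rangle_{g_0}}dv\right)^{1/2}\leq 16\widetilde{C}_1\,\Lambda_i^{1/4},
\]
using $\|\nabla u_i\|_{\nu}^{2}\leq\Lambda_i^{1/2}$. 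This produces $\Lambda_i^{1/4}$, not $\Lambda_1^{1/4}$, and since $\Lambda_1^{1/4}\leq\Lambda_i^{1/4}$ the replacement by $\Lambda_1^{1/4}$ would \emph{not} be a valid upper bound; there is no ``ground-state'' pairing that legitimately achieves it. The paper's own intermediate estimates for $\sum_{\alpha}\widehat{\Psi}_{i,\alpha}$ and $\sum_{\alpha}\widehat{\Theta}_{i,\alpha}$ and its Corollaries 4.2 and 4.4 all carry $\Lambda_i^{1/4}$, so the $\Lambda_1^{1/4}$ appearing in the Theorem statement (and in the two display lines \eqref{wid-PSI}, \eqref{wid-THETA}) is almost certainly a typo. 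You should therefore not spend effort trying to ``keep the sharp power $\Lambda_1^{1/4}$''; the natural and correct outcome of your argument is $\Lambda_i^{1/4}$, and with that correction the rest of your outline goes through as the paper's own proof does.
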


\begin{rem}In theorem {\rm\ref{thm1.1}}, assuming that $|\nu^{\top}|_{g_{0}}=0$  then the constant is given by
$$
C_{1}=\frac{1}{4} \inf _{\sigma \in \Pi} \max _{\Omega}\left(n^{2}H^{2}\right).
$$Therefore, inequality \eqref{thm-1.1-ineq} covers inequality \eqref{WX-ineq} given by Wang and Xia in {\rm \cite{WX3}}.

\end{rem}

As an application of theorem \ref{thm1.1}, we investigate the eigenvalues of $\mathfrak{L}_{II}^{2}$ operator on the translating solitons and prove the following theorem.

\begin{thm}\label{thm1.2}\textbf{\emph{(Domain Independent Bound)}}
Let $(\mathcal{M}^{n},g)$ be an $n$-dimensional complete translating soliton isometrically embedded into the Euclidean space $\mathbb{R}^{n+p}$, then
eigenvalues $\Lambda
_{i}$ of the clamped plate problem \eqref{L-2-prob} of  the $\mathfrak{L}_{II}^{2}$ operator satisfy
\begin{equation}\label{thm-1.1-ineq-trans}\begin{aligned}
\sum_{i=1}^{k}\left(\Lambda_{k+1}-\Lambda_{i}\right)^{2}
\leq & \frac{4}{n}\left\{\sum_{i=1}^{k}\left(\Lambda_{k+1}-\Lambda_{i}\right)^{2}\left(\left(\frac{n}{2}+1\right) \Lambda_{i}^{\frac{1}{2}} + \Lambda^{\frac{1}{4}}_{i}+\frac{n^{2}}{4}\right)\right\}^{\frac{1}{2}} \\
& \times\left\{\sum_{i=1}^{k}\left(\Lambda_{k+1}-\Lambda_{i}\right)\left(\Lambda_{i}^{\frac{1}{2}} + \Lambda^{\frac{1}{4}}_{i}+\frac{n^{2}}{4}\right)\right\}^{\frac{1}{2}}.
\end{aligned}\end{equation}

\end{thm}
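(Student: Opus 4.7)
The plan is to derive Theorem~\ref{thm1.2} from Theorem~\ref{thm1.1} by exploiting the translating soliton equation $\mathbf{H}=\nu_{0}^{\perp}$, which turns the generally domain-dependent constants $C_{1}$ and $\widetilde{C}_{1}$ into purely dimensional ones. The key observation is the pointwise identity obtained from orthogonality: since $\nu=\nu_{0}$ is a unit vector and $\nu_{0}^{\perp}=\mathbf{H}$,
\[
|\nu^{\top}|_{g_{0}}^{2}+|\mathbf{H}|_{g_{0}}^{2}=|\nu_{0}^{\top}|_{g_{0}}^{2}+|\nu_{0}^{\perp}|_{g_{0}}^{2}=1,
\]
which immediately yields the pointwise bounds $H=|\mathbf{H}|\leq 1$ and $|\nu^{\top}|_{g_{0}}\leq 1$ at every point of $\mathcal{M}^{n}$.

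Next, I would feed these pointwise inequalities into the constants of Theorem~\ref{thm1.1}. From $|\nu^{\top}|_{g_{0}}\leq 1$ we obtain $\widetilde{C}_{1}\leq 1/4$, hence $4\widetilde{C}_{1}\Lambda_{1}^{1/4}\leq \Lambda_{1}^{1/4}\leq \Lambda_{i}^{1/4}$, which already accounts for the $\Lambda_{i}^{1/4}$ summand on the right-hand side of \eqref{thm-1.1-ineq-trans}. The more delicate contribution is the constant term $4\widetilde{C}_{1}^{2}+C_{1}$; here I would rely on the joint pointwise estimate
\[
\frac{|\nu^{\top}|_{g_{0}}^{2}+n^{2}H^{2}}{4}=\frac{1+(n^{2}-1)H^{2}}{4}\leq\frac{n^{2}}{4},
\]
in which both ingredients live at the same point of $\mathcal{M}^{n}$ and the inequality uses only $H\leq 1$.

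The main obstacle is that $\widetilde{C}_{1}$ and $C_{1}$ are \emph{separate} suprema over $\Omega$, so naively plugging the individual bounds $4\widetilde{C}_{1}^{2}\leq 1/4$ and $C_{1}\leq n^{2}/4$ into \eqref{thm-1.1-ineq} would produce an unwanted extra $1/4$. To recover the cleaner constant $n^{2}/4$ appearing in \eqref{thm-1.1-ineq-trans}, I would revisit the step in the proof of Theorem~\ref{thm1.1} where the two curvature contributions first arise from the commutator computation involving $\mathfrak{L}_{\nu}$ and the coordinate functions: in that step the quantities $|\nu^{\top}|_{g_{0}}^{2}$ and $n^{2}H^{2}$ enter additively under the same integral, so for a translating soliton one may bound them jointly by $n^{2}$ and then take a single maximum rather than two. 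Once this refinement is in place, Theorem~\ref{thm1.1} specializes directly to the universal inequality \eqref{thm-1.1-ineq-trans}, giving the claimed domain-independent bound for the $\mathfrak{L}_{II}^{2}$ operator on the translating soliton.
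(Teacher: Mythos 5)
Your proposal is correct and takes essentially the same route as the paper: both arguments start from the pointwise soliton identities $|\nu_{0}^{\top}|^{2}+|\mathbf{H}|^{2}=|\nu_{0}|^{2}=1$ (hence $H\leq 1$, $|\nu_{0}^{\top}|\leq 1$, and $n^{2}H^{2}+|\nu_{0}^{\top}|^{2}\leq n^{2}$), and both recognize that to obtain the clean constant $n^{2}/4$ one must bound $n^{2}H^{2}+|\nu^{\top}|_{g_{0}}^{2}$ jointly inside the integral estimates for $\widehat{\Psi}_{i,\alpha}$ and $\widehat{\Theta}_{i,\alpha}$ (this is the paper's inequality \eqref{3.8}) rather than plug the separate suprema $C_{1}$ and $\widetilde{C}_{1}$ into \eqref{thm-1.1-ineq}.
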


\begin{rem}Clearly,  eigenvalue inequality \eqref{thm-1.1-ineq-trans} is universal. Therefore, our work can be regarded as a new contribution to this research direction.
\end{rem}
This paper is organized as follows. In section \ref{sec2}, we prove a general formula for
eigenvalues of clamped plate problem  \eqref{L-2-prob}. Furthermore, we prove theorem \ref{thm1.1} in section \ref{sec3} by applying general formula and some results of Chen-Cheng type. In addition, we prove theorem \ref{thm1.2} in the remainder of this section.  As several attractive applications of theorem \ref{thm1.1}, we further consider the eigenvalues of $\mathfrak{L}_{\nu}^{2}$ operator of the minimal submanifolds on the Euclidean spaces, submanifolds on the unit spheres and projective spaces in section \ref{sec4}.

\section{Key lemma and its proof} \label{sec2}
\vskip3mm
In this section, we would like to establish a general formula, which will play an important role in the proof  of theorem \ref{thm1.1}.  Our  general formula says the following.

\begin{lem} \label{lemma2.1} Let $\Omega$ be a bounded domain on an $n$-dimensional complete Riemannian manifold $(\mathcal{M}^{n},g)$, and $\Lambda_{i}$ be the $i^{\text {th}}$ eigenvalue of the eigenvalue problem \eqref{L-2-prob} and $u_{i}$ be the orthonormal eigenfunction corresponding to $\Lambda_{i},$ that is,
$$
\left\{\begin{array}{ll}
\mathfrak{L}_{\nu}^{2} u_{i}=\Lambda_{i} u_{i}, & \text { in } \Omega, \\
u_{i}=\frac{\partial u_{i}}{\partial v}=0, & \text { on } \partial \Omega, \\
\int_{\Omega} u_{i} u_{j} e^{\langle\nu,X\rangle_{g_{0}}}dv=\delta_{i j}, & \forall i, j=1,2, \ldots,
\end{array}\right.
$$
where $\nu$ is an outward normal vector field of $\partial \Omega$. Then, for any function $f  \in C^{4}(\Omega) \cap C^{3}(\partial \Omega)$ and any positive integer $k,$ we have

\begin{equation}\begin{aligned}\label{lem-2.1} &
\sum_{i=1}^{k}\left(\Lambda_{k+1}-\Lambda_{i}\right)^{2} \int_{\Omega} u_{i}^{2}|\nabla f |_{g}^{2} e^{\langle\nu,X\rangle_{g_{0}}}dv\\&\quad\quad\leq \sum_{i=1}^{k} \delta\left(\Lambda_{k+1}-\Lambda_{i}\right)^{2}\int_{\Omega}\Psi_{i}(f)e^{\langle\nu,X\rangle_{g_{0}}}dv\\&\quad\quad
+\sum_{i=1}^{k} \frac{\left(\Lambda_{k+1}-\Lambda_{i}\right)}{\delta}\int_{\Omega}\Theta_{i}(f)e^{\langle\nu,X\rangle_{g_{0}}}dv;
\end{aligned}\end{equation}
where

\begin{equation}\label{Psi}\Psi_{i}(f )=-2|\nabla f |_{g}^{2} u_{i} \mathfrak{L}_{\nu} u_{i}+4 u_{i} \mathfrak{L}_{\nu} f \left\langle\nabla f , \nabla u_{i}\right\rangle_{g}+4\left\langle\nabla f , \nabla u_{i}\right\rangle_{g}^{2}+u_{i}^{2}\left( \mathfrak{L}_{\nu} f \right)^{2},\end{equation}

\begin{equation}\label{Theta}\Theta_{i}(f)=\left(\left\langle\nabla f , \nabla u_{i}\right\rangle_{g}+\frac{u_{i} \mathfrak{L}_{\nu} f }{2}\right)^{2} ,\end{equation}
and
$\delta$ is any positive constant.
\end{lem}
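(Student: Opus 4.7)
The plan is to adapt the Cheng--Yang commutator method for the biharmonic operator to the weighted drift operator $\mathfrak{L}_\nu$. The argument unfolds in three stages.

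First, I would introduce the trial functions
\[
\varphi_i \;=\; f u_i - \sum_{j=1}^{k} a_{ij} u_j, \qquad a_{ij} \;=\; \int_\Omega f u_i u_j \, e^{\langle \nu, X\rangle_{g_0}}\, dv,
\]
so that $\varphi_i$ is orthogonal to each $u_j$ ($j \le k$) in the weighted $L^2$ inner product and inherits the clamped boundary conditions $\varphi_i = \partial \varphi_i/\partial \mathbf{n} = 0$ on $\partial\Omega$. Since $\mathfrak{L}_\nu^2$ is self-adjoint with respect to $e^{\langle \nu,X\rangle_{g_0}}\, dv$, the Rayleigh--Ritz variational characterization of $\Lambda_{k+1}$ applies. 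Combining it with the identity $\mathfrak{L}_\nu^2(fu_i) = \Lambda_i f u_i + [\mathfrak{L}_\nu^2, f] u_i$ and using that $\int \varphi_i u_j\, e^{\langle\nu,X\rangle_{g_0}}dv = 0$ for $j\le k$, I obtain
\[
(\Lambda_{k+1} - \Lambda_i) \int_\Omega \varphi_i^{2}\, e^{\langle \nu, X\rangle_{g_0}}\, dv \;\le\; \int_\Omega \varphi_i\, [\mathfrak{L}_\nu^2, f] u_i\, e^{\langle \nu, X\rangle_{g_0}}\, dv.
\]

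Second, a direct Leibniz computation yields the first-order commutator
\[
[\mathfrak{L}_\nu, f]u_i \;=\; 2\langle \nabla f, \nabla u_i\rangle_g + u_i\, \mathfrak{L}_\nu f,
\]
whose square is precisely $4\,\Theta_i(f)$. Iterating via $[\mathfrak{L}_\nu^2, f] = \mathfrak{L}_\nu [\mathfrak{L}_\nu, f] + [\mathfrak{L}_\nu, f]\mathfrak{L}_\nu$ and carrying out several weighted integrations by parts (justified because $\varphi_i$ and its normal derivative vanish on $\partial \Omega$), the right-hand side above is rearranged. Simultaneously, integrating $u_i\,\mathfrak{L}_\nu(u_i f^{2})$ by parts in the weighted measure produces the identity that transfers the target density $u_i^2|\nabla f|_g^2$ onto the left-hand side of the inequality. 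Collecting terms, the pure-square contributions assemble into $\Psi_i(f)$, while the mixed contributions between $\langle \nabla f, \nabla u_i\rangle_g$ and $u_i \mathfrak{L}_\nu f$ bundle into $\Theta_i(f)$.

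Finally, I multiply by $(\Lambda_{k+1} - \Lambda_i)$, sum over $i=1,\ldots,k$, and apply the Cauchy--Schwarz inequality in the form $2|AB| \le \delta A^{2} + \delta^{-1} B^{2}$ to split the resulting mixed term into a $\delta$-weighted piece controlled by $\Psi_i(f)$ and a $\delta^{-1}$-weighted piece controlled by $\Theta_i(f)$. The main obstacle is the algebraic bookkeeping in the second stage: the extra drift term $\langle \nu, \nabla(\cdot)\rangle_{g_0}$ in $\mathfrak{L}_\nu$ generates additional contributions when expanding $[\mathfrak{L}_\nu^2, f] u_i$, and these must be repackaged via repeated weighted integration by parts into exactly the densities $\Psi_i(f)$ and $\Theta_i(f)$ displayed in \eqref{Psi} and \eqref{Theta}, with every boundary contribution annihilated by the clamped conditions. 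Once this identification is carried out carefully, the Cauchy--Schwarz step is routine and produces \eqref{lem-2.1}.
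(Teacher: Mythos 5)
Your overall framework — trial functions $\varphi_i = f u_i - \sum_j a_{ij} u_j$, Rayleigh--Ritz against $\Lambda_{k+1}$, the commutator $\omega_i := [\mathfrak{L}_\nu^2, f]u_i$, and the free parameter $\delta$ from Cauchy--Schwarz — is exactly the skeleton of the paper's proof, and your observations that $[\mathfrak{L}_\nu,f]u_i = 2\langle\nabla f,\nabla u_i\rangle_g + u_i\mathfrak{L}_\nu f$ has square $4\Theta_i(f)$ and that $\int_\Omega f u_i \omega_i\,e^{\langle\nu,X\rangle_{g_0}}dv = \int_\Omega\Psi_i(f)\,e^{\langle\nu,X\rangle_{g_0}}dv$ are correct and are the identities the paper verifies.

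However, the final stage as you describe it would not close. If you apply Cauchy--Schwarz naively to $(\Lambda_{k+1}-\Lambda_i)^2\int_\Omega(-2\varphi_i)\bigl(\langle\nabla f,\nabla u_i\rangle_g + \tfrac{1}{2}u_i\mathfrak{L}_\nu f\bigr)e^{\langle\nu,X\rangle_{g_0}}dv$ and then bound $\int\varphi_i^2$ via Rayleigh--Ritz, the right-hand side acquires the term $-\delta(\Lambda_{k+1}-\Lambda_i)^2\sum_j(\Lambda_j-\Lambda_i)a_{ij}^2$, which has no definite sign, and the left-hand side acquires $2(\Lambda_{k+1}-\Lambda_i)^2\sum_j a_{ij}t_{ij}$ with $t_{ij}:=\int_\Omega u_j\bigl(\langle\nabla f,\nabla u_i\rangle_g + \tfrac{1}{2}u_i\mathfrak{L}_\nu f\bigr)e^{\langle\nu,X\rangle_{g_0}}dv$. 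These residual pieces do not vanish and cannot simply be dropped; they are the entire difficulty. The paper handles them by three structural steps that are absent from your sketch: (i) before applying Cauchy--Schwarz one must subtract the projection $\sum_j t_{ij}u_j$, which is legitimate because $\varphi_i\perp u_j$, and this reduces the $\delta^{-1}$ term to $\int\Theta_i - \sum_j t_{ij}^2$ by orthogonality; (ii) one must derive the identity $b_{ij}=(\Lambda_j-\Lambda_i)a_{ij}$ so that the Rayleigh--Ritz bound contributes $-\sum_j(\Lambda_j-\Lambda_i)a_{ij}^2$; (iii) one must symmetrize over $i,j$ using $a_{ij}=a_{ji}$, $t_{ij}=-t_{ji}$, which converts the $a_{ij}t_{ij}$ contribution into $-2\sum_{i,j}(\Lambda_{k+1}-\Lambda_i)(\Lambda_i-\Lambda_j)a_{ij}t_{ij}$, and then observe that $\sum_{i,j}(\Lambda_{k+1}-\Lambda_i)\bigl[\sqrt{\delta}(\Lambda_i-\Lambda_j)a_{ij}-\delta^{-1/2}t_{ij}\bigr]^2\ge 0$ lets all three spurious sums be discarded at once. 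Without the projection, the antisymmetry, and this sum-over-$i,j$ perfect square, a per-$i$ application of $2|AB|\le\delta A^2 + \delta^{-1}B^2$ leaves uncontrolled terms, so describing it as ``routine'' bookkeeping understates what is, in fact, the crux of Lemma~\ref{lemma2.1}.
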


\begin{proof}
Let $$\varphi_{i}=f  u_{i}-\sum_{j=1}^{k} a_{i j} u_{j},$$ for any integer $k \geq 1,$ where
$$
a_{i j}=\int_{\Omega} f  u_{i} u_{j} e^{\langle\nu,X\rangle_{g_{0}}}dv=a_{j i},
$$then we have
$$
\left.\varphi_{i}\right|_{\partial \mathcal{M}^{n}}=\left.\frac{\partial \varphi_{i}}{\partial \nu}\right|_{\partial \mathcal{M}^{n}}=0,$$and

\begin{equation}\label{int-om-vu}\int_{\Omega} \varphi_{i} u_{j} e^{\langle\nu,X\rangle_{g_{0}}}dv=0, \quad \forall i, j=1, \ldots, k.
\end{equation}
From the Rayleigh-Ritz inequality, we get

\begin{equation}\label{RR}\Lambda_{k+1} \int_{\Omega} \varphi_{i}^{2} e^{\langle\nu,X\rangle_{g_{0}}}dv \leq \int_{\Omega} \varphi_{i} \mathfrak{L}_{\nu}^{2} \varphi_{i} e^{\langle\nu,X\rangle_{g_{0}}}dv.\end{equation}By direct computation, we have

\begin{equation*}
\mathfrak{L}_{\nu}\left(f
 u_{i}\right)=f  \mathfrak{L}_{\nu} u_{i}+2\left\langle\nabla f , \nabla u_{i}\right\rangle_{g}+u_{i} \mathfrak{L}_{\nu} f ,
\end{equation*}
and

\begin{equation}
\begin{aligned}\label{L-2-fui}
&\mathfrak{L}_{\nu}^{2}\left(f  u_{i}\right)\\&= \mathfrak{L}_{\nu}\left(f  \mathfrak{L}_{\nu} u_{i}+2\left\langle\nabla f , \nabla u_{i}\right\rangle_{g}+u_{i} \mathfrak{L}_{\nu} f \right) \\
&= f  \mathfrak{L}_{\nu}^{2} u_{i}+2\left\langle\nabla f , \nabla\left( \mathfrak{L}_{\nu} u_{i}\right)\right\rangle_{g}+\mathfrak{L}_{\nu} f  \mathfrak{L}_{\nu} u_{i} +2 \mathfrak{L}_{\nu}\left(\left\langle\nabla f , \nabla u_{i}\right\rangle_{g}\right)+ \mathfrak{L}_{\nu}\left(u_{i} \mathfrak{L}_{\nu} f \right) \\
&= \Lambda_{i} f  u_{i}+\omega_{i},
\end{aligned}
\end{equation}
where

\begin{equation*}
\omega_{i}=2\left\langle\nabla f , \nabla\left( \mathfrak{L}_{\nu} u_{i}\right)\right\rangle_{g}+ \mathfrak{L}_{\nu} f  \mathfrak{L}_{\nu} u_{i}+2 \mathfrak{L}_{\nu}\left(\left\langle\nabla f , \nabla u_{i}\right\rangle_{g}\right)+ \mathfrak{L}_{\nu}\left(u_{i} \mathfrak{L}_{\nu} f \right).
\end{equation*}
From \eqref{L-2-fui} and \eqref{int-om-vu}, we can get

\begin{equation}
\begin{aligned}\label{v-i-L}
\int_{\Omega} \varphi_{i} \mathfrak{L}_{\nu}^{2} \varphi_{i} e^{\langle\nu,X\rangle_{g_{0}}}dv &=\int_{\Omega} \varphi_{i} \mathfrak{L}_{\nu}^{2}\left(f  u_{i}\right) e^{\langle\nu,X\rangle_{g_{0}}}dv \\
&=\Lambda_{i} \int_{\Omega} \varphi_{i} f  u_{i} e^{\langle\nu,X\rangle_{g_{0}}}dv+\int_{\Omega} \varphi_{i} \omega_{i} e^{\langle\nu,X\rangle_{g_{0}}}dv \\
&=\Lambda_{i} \int_{\Omega} \varphi_{i}^{2} e^{\langle\nu,X\rangle_{g_{0}}}dv+\int_{\Omega} f  u_{i} \omega_{i} e^{\langle\nu,X\rangle_{g_{0}}}dv-\sum_{j=1}^{k} a_{i j} b_{i j},
\end{aligned}
\end{equation}
where

\begin{equation}\label{bij}b_{i j}=\int_{\Omega} \omega_{i} u_{j} e^{\langle\nu,X\rangle_{g_{0}}}dv.\end{equation}
By \eqref{RR}, \eqref{v-i-L} and \eqref{bij}, we have

\begin{equation}
\begin{aligned}\label{int-nf-nu}
\left(\Lambda_{k+1}-\Lambda_{i}\right) \int_{\Omega} \varphi_{i}^{2} e^{\langle\nu,X\rangle_{g_{0}}}dv \leq \int_{\Omega} f u_{i} \omega_{i} e^{\langle\nu,X\rangle_{g_{0}}}dv-\sum_{j=1}^{k} a_{i j} b_{i j}.
\end{aligned}
\end{equation}
Using integration by parts, we have

\begin{equation}
\begin{aligned}\label{lf-luj}
&\int\left\langle\nabla f , \nabla u_{j}\right\rangle_{g} \mathfrak{L}_{\nu} u_{i} e^{\langle\nu,X\rangle_{g_{0}}}dv\\
&=-\int_{\Omega} u_{j} {\rm div} \left(e^{\langle\nu,X\rangle_{g_{0}}} \mathfrak{L}_{\nu} u_{i} \nabla f \right) dv \\
&=-\int_{\Omega} u_{j} e^{\langle\nu,X\rangle_{g_{0}}}\left(\left\langle\nabla f , \nabla\left( \mathfrak{L}_{\nu} u_{i}\right)\right\rangle_{g}+ \mathfrak{L}_{\nu} u_{i} \Delta f + \mathfrak{L}_{\nu} u_{i}\langle\nu, \nabla f \rangle_{g_{0}}\right) dv \\
&=-\int_{\Omega} u_{j}\left(\left\langle\nabla f , \nabla\left( \mathfrak{L}_{\nu} u_{i}\right)\right\rangle_{g}+ \mathfrak{L}_{\nu} u_{i} \mathfrak{L}_{\nu} f \right) e^{\langle\nu,X\rangle_{g_{0}}}dv.
\end{aligned}
\end{equation}
 From \eqref{lf-luj}, we infer that,

\begin{equation}\begin{aligned}\label{l-i-l-j}
&\left(\Lambda_{j}-\Lambda_{i}\right) a_{i j} \\&
=\int_{\Omega}\left(f u_{i} \mathfrak{L}_{\nu}^{2} u_{j}-f  u_{j} \mathfrak{L}_{\nu}^{2} u_{i}\right) e^{\langle\nu,X\rangle_{g_{0}}}dv \\&
=\int_{\Omega}\left( \mathfrak{L}_{\nu}\left(f  u_{i}\right) \mathfrak{L}_{\nu} u_{j}- \mathfrak{L}_{\nu}\left(f  u_{j}\right) \mathfrak{L}_{\nu} u_{i}\right) e^{\langle\nu,X\rangle_{g_{0}}}dv \\&
=\int_{\Omega}\left[\left(u_{i} \mathfrak{L}_{\nu} f +2\left\langle\nabla f , \nabla u_{i}\right\rangle_{g}\right) \mathfrak{L}_{\nu} u_{j}\right.\\&\quad-\left.\left(u_{j} \mathfrak{L}_{\nu} f +2\left\langle\nabla f , \nabla u_{j}\right\rangle_{g}\right) \mathfrak{L}_{\nu} u_{i}\right] e^{\langle\nu,X\rangle_{g_{0}}}dv \\&
=\int_{\Omega}\Big{[}u_{j}\left( \mathfrak{L}_{\nu}\left(u_{i} \mathfrak{L}_{\nu} f \right)+2 \mathfrak{L}_{\nu}\left\langle\nabla f , \nabla u_{i}\right\rangle_{g}\right)\\&
\quad\quad\quad\quad\quad-u_{j} \mathfrak{L}_{\nu} f  \mathfrak{L}_{\nu} u_{i}+2 u_{j}\left(\left\langle\nabla f , \nabla\left( \mathfrak{L}_{\nu} u_{i}\right)\right\rangle_{g}+ \mathfrak{L}_{\nu} f  \mathfrak{L}_{\nu} u_{i}\right)\Big{]} e^{\langle\nu,X\rangle_{g_{0}}}dv \\&
=\int_{\Omega} u_{j}\left[ \mathfrak{L}_{\nu}\left(u_{i} \mathfrak{L}_{\nu} f \right)+2 \mathfrak{L}_{\nu}\left(\left\langle\nabla f , \nabla u_{i}\right\rangle_{g}\right)\right.\\&\quad\left.+2\left\langle\nabla f , \nabla\left( \mathfrak{L}_{\nu} u_{i}\right)\right\rangle_{g}+ \mathfrak{L}_{\nu} f  \mathfrak{L}_{\nu} u_{i}\right) e^{\langle\nu,X\rangle_{g_{0}}}dv \\&
=\int_{\Omega} \omega_{i} u_{j} e^{\langle\nu,X\rangle_{g_{0}}}dv\\
&=b_{i j}.
\end{aligned}\end{equation}It follows from \eqref{int-nf-nu} and \eqref{l-i-l-j} that

\begin{equation*}\begin{aligned}
\left(\Lambda_{k+1}-\Lambda_{i}\right) \int_{\Omega} \varphi_{i}^{2} e^{\langle\nu,X\rangle_{g_{0}}}dv &\leq \int_{\Omega} \varphi_{i} \omega_{i} e^{\langle\nu,X\rangle_{g_{0}}}dv\\&=\int_{\Omega} f  u_{i} \omega_{i} e^{\langle\nu,X\rangle_{g_{0}}}dv-\sum_{j=1}^{k}\left(\Lambda_{j}-\Lambda_{i}\right) a_{i j}^{2}.
\end{aligned}\end{equation*}
On the other hand, from \eqref{int-om-vu} and \eqref{bij}, we have

\begin{equation}
\begin{aligned}\label{lk+1-li}
&\left(\Lambda_{k+1}-\Lambda_{i}\right)\left(\int_{\Omega} \varphi_{i} \omega_{i} e^{\langle\nu,X\rangle_{g_{0}}}dv\right)^{2} \\ &=\left(\Lambda_{k+1}-\Lambda_{i}\right)\left(\int_{\Omega} \varphi_{i}\left(\omega_{i}-\sum_{j=1}^{k} b_{i j} u_{j}\right) e^{\langle\nu,X\rangle_{g_{0}}}dv\right)^{2} \\
& \leq\left(\Lambda_{k+1}-\Lambda_{i}\right)\left\|\varphi_{i}\right\|^{2}\left(\left\|\omega_{i}\right\|^{2}-\sum_{j=1}^{k} b_{i j}^{2}\right) \\
& \leq \int_{\Omega} \varphi_{i} \omega_{i} e^{\langle\nu,X\rangle_{g_{0}}}dv\left(\left\|\omega_{i}\right\|^{2}-\sum_{j=1}^{k} b_{i j}^{2}\right),
\end{aligned}
\end{equation}
where $$\left\|\varphi_{i}\right\|^{2}=\int_{\Omega} \varphi_{i}^{2} e^{\langle\nu,X\rangle_{g_{0}}}dv .$$ By Bessel inequality and \eqref{bij}, we know that

\begin{equation}\label{omega}\left\|\omega_{i}\right\|^{2}-\sum_{j=1}^{k} b_{i j}^{2}\geq0.\end{equation}It follows from \eqref{lk+1-li} and \eqref{omega} that

\begin{equation}\label{Bel}\int_{\Omega} \varphi_{i} \omega_{i} e^{\langle\nu,X\rangle_{g_{0}}}dv\geq0.\end{equation}
From \eqref{lk+1-li} and \eqref{Bel}, we can get

\begin{equation}\label{lk+1-li-2}
\left(\Lambda
_{k+1}-\Lambda_{i}\right) \int_{\Omega} \varphi_{i} \omega_{i} e^{\langle\nu,X\rangle_{g_{0}}}dv \leq\left\|\omega_{i}\right\|^{2}-\sum_{j=1}^{k} b_{i j}^{2}.
\end{equation}
Multiplying \eqref{lk+1-li-2} by ( $\Lambda_{k+1}-\Lambda_{i}$ ) and summing on $i$ from 1 to $k$, and using \eqref{l-i-l-j}, we get

\begin{equation}\label{sum-1}\begin{aligned}
\sum_{i=1}^{k}\left(\Lambda_{k+1}-\Lambda_{i}\right)^{2} \int_{\Omega} \varphi_{i} \omega_{i}e^{\langle\nu,X\rangle_{g_{0}}}dv \leq & \sum_{i=1}^{k}\left(\Lambda_{k+1}-\Lambda_{i}\right)\left\|\omega_{i}\right\|^{2}\\&-\sum_{i, j=1}^{k}\left(\Lambda_{k+1}-\Lambda_{i}\right)\left(\Lambda_{i}-\Lambda_{j}\right)^{2} a_{i j}^{2}.\end{aligned}
\end{equation}
Using integration by parts, we have
$$
\begin{aligned}
&\int_{\Omega} u_{j}\left\langle\nabla f , \nabla u_{i}\right\rangle_{g} e^{\langle\nu,X\rangle_{g_{0}}}dv\\
&=-\int_{\Omega} u_{i} \operatorname{div}\left(u_{j} e^{\langle\nu,X\rangle_{g_{0}}} \nabla f \right) dv \\
&=-\int_{\Omega} u_{i} e^{\langle\nu,X\rangle_{g_{0}}}\left(\left\langle\nabla f , \nabla u_{j}\right\rangle_{g}+u_{j}\langle\nu, \nabla f \rangle+u_{j} \Delta f \right) dv \\
&=-\int_{\Omega} u_{i}\left(\left\langle\nabla f , \nabla u_{j}\right\rangle_{g}+u_{j} \mathfrak{L}_{\nu} f \right) e^{\langle\nu,X\rangle_{g_{0}}}
dv,
\end{aligned}
$$and
$$
\begin{aligned}&
\int_{\Omega} f  u_{i}\left\langle\nabla f , \nabla u_{i}\right\rangle_{g} e^{\langle\nu,X\rangle_{g_{0}}}dv \\&
\quad=\int_{\Omega} f  u_{i} e^{\langle\nu,X\rangle_{g_{0}}}
\left\langle\nabla f , \nabla u_{i}\right\rangle_{g} dv \\&
\quad=-\int_{\Omega} u_{i} \operatorname{div}\left(f  u_{i} e^{\langle\nu,X\rangle_{g_{0}}} \nabla f \right) dv \\&
\quad=-\int_{\Omega} u_{i} e^{\langle\nu,X\rangle_{g_{0}}}\left(u_{i}|\nabla f |_{g}^{2}+f \left\langle\nabla f , \nabla u_{i}\right\rangle_{g}+f  u_{i}\langle\nu, \nabla f\rangle+f  u_{i} \Delta f \right) dv \\&
\quad=-\int_{\Omega}\left(u_{i}^{2}|\nabla f |_{g}^{2}+f  u_{i}^{2} \mathfrak{L}_{\nu} f \right) e^{\langle\nu,X\rangle_{g_{0}}}dv-\int_{\Omega} f  u_{i}\left\langle\nabla f, \nabla u_{i}\right\rangle_{g} e^{\langle\nu,X\rangle_{g_{0}}}dv,
\end{aligned}
$$
which implies that
$$
2 \int_{\Omega} f  u_{i}\left\langle\nabla f , \nabla u_{i}\right\rangle_{g} e^{\langle\nu,X\rangle_{g_{0}}}dv=-\int_{\Omega}\left(u_{i}^{2}|\nabla f |_{g}^{2}+f  u_{i}^{2} \mathfrak{L}_{\nu} f \right) e^{\langle\nu,X\rangle_{g_{0}}}dv.
$$
Setting $$t_{i j}=\int_{\Omega} u_{j}\left(\left\langle\nabla f , \nabla u_{i}\right\rangle_{g}+\frac{u_{i} \mathfrak{L}_{\nu} f }{2}\right) e^{\langle\nu,X\rangle_{g_{0}}}dv,$$ we have $$t_{ij}=-t_{ji},$$ and

\begin{equation}\label{int-2}
\begin{aligned}&
\int_{\Omega}-2 \varphi_{i}\left(\left\langle\nabla f , \nabla u_{i}\right\rangle_{g}+\frac{u_{i} \mathfrak{L}_{\nu} f }{2}\right) e^{\langle\nu,X\rangle_{g_{0}}}dv \\&
\quad=\int_{\Omega}\left(-2 f  u_{i}\left\langle\nabla f , \nabla u_{i}\right\rangle_{g}-f  u_{i}^{2} \mathfrak{L}_{\nu} f \right) e^{\langle\nu,X\rangle_{g_{0}}}dv+2 \sum_{j=1}^{k} a_{i j} t_{i j} \\&
\quad=\int_{\Omega} u_{i}^{2}|\nabla f |_{g}^{2} e^{\langle\nu,X\rangle_{g_{0}}}dv+2 \sum_{j=1}^{k} a_{ij} t_{ij}.
\end{aligned}
\end{equation}
By \eqref{sum-1}, \eqref{int-2} and the Schwarz inequality, we get

\begin{equation}\begin{aligned}\label{sum-2}&
\left(\Lambda_{k+1}-\Lambda_{i}\right)^{2}\left(\int_{\Omega} u_{i}^{2}|\nabla f |_{g}^{2} e^{\langle\nu,X\rangle_{g_{0}}}dv+2 \sum_{j=1}^{k} a_{i j} t_{i j}\right) \\&
\quad=\left(\Lambda_{k+1}-\Lambda_{i}\right)^{2} \int_{\Omega}-2 \varphi_{i}\left(\left\langle\nabla f, \nabla u_{i}\right\rangle_{g}+\frac{u_{i} \mathfrak{L}_{\nu} f }{2}-\sum_{j=1}^{k} t_{i j} u_{j}\right) e^{\langle\nu,X\rangle_{g_{0}}}dv \\&
\quad \leq \delta\left(\Lambda_{k+1}-\Lambda_{i}\right)^{3} \int_{\Omega} \varphi_{i}^{2} e^{\langle\nu,X\rangle_{g_{0}}}dv\\&
\quad+\frac{\Lambda_{k+1}-\Lambda_{i}}{\delta} \int_{\Omega}\left(\left\langle\nabla f , \nabla u_{i}\right\rangle_{g}+\frac{u_{i} \mathfrak{L}_{\nu} f}{2}-\sum_{j=1}^{k} t_{i j} u_{j}\right)^{2} e^{\langle\nu,X\rangle_{g_{0}}}dv \\&
\quad \leq \delta\left(\Lambda_{k+1}-\Lambda_{i}\right)^{2}\left(\int_{\Omega} f  u_{i} \omega_{i} e^{\langle\nu,X\rangle_{g_{0}}}dv
-\sum_{j=1}^{k}\left(\Lambda_{j}-\Lambda_{i}\right) a_{i j}^{2}\right) \\&
\quad+\frac{\Lambda_{k+1}-\Lambda_{i}}{\delta}\left(\int_{\Omega}\left(\left\langle\nabla f , \nabla u_{i}\right\rangle_{g}
+\frac{u_{i} \mathfrak{L}_{\nu} f }{2}\right)^{2} e^{\langle\nu,X\rangle_{g_{0}}}dv-\sum_{j=1}^{k} t_{i j}^{2}\right),
\end{aligned}\end{equation}
where $\delta$ is any positive constant.  Noticing $a_{i j}=a_{j i}$ and  $t_{i j}=-t_{j i}$, we have
\begin{equation}\begin{aligned}\label{SUM-Labda}&
2 \sum_{i=1}^{k}\left[\left(\Lambda_{k+1}-\Lambda_{i}\right)^{2} \sum_{j=1}^{k}a_{ij}t_{ij}\right]\\&=2 \sum_{i, j=1}^{k}\left(\Lambda_{k+1}-\Lambda_{i}\right)^{2}a_{ij}t_{ij} \\&
=\sum_{i, j=1}^{k}\left(\Lambda_{k+1}-\Lambda_{i}\right)^{2} a_{ij}t_{ij}-\sum_{i, j=1}^{k}\left(\Lambda_{k+1}-\Lambda_{j}\right)^{2} a_{ij}t_{ij} \\&
=-\sum_{i, j=1}^{k}\left(\Lambda_{k+1}-\Lambda_{i}+\Lambda_{k+1}-\Lambda_{j}\right)\left(\Lambda_{i}-\Lambda_{j}\right) a_{ij}t_{ij} \\&
=-2 \sum_{i, j=1}^{k}\left(\Lambda_{k+1}-\Lambda_{i}\right)\left(\Lambda_{i}-\Lambda_{j}\right) a_{ij}t_{ij}.
\end{aligned}\end{equation}
Summing over $i$ from 1 to $k$ in \eqref{sum-2} and utilizing \eqref{SUM-Labda}, we yield

\begin{equation*}\begin{aligned}&
\sum_{i=1}^{k}\left(\Lambda_{k+1}-\Lambda_{i}\right)^{2} \int_{\Omega} u_{i}^{2}|\nabla f
|^{2} e^{\langle\nu,X\rangle_{g_{0}}}dv-2 \sum_{i, j=1}^{k}\left(\Lambda_{k+1}-\Lambda_{i}\right)\left(\Lambda_{i}-\Lambda_{j}\right) a_{i j} t_{i j} \\&
\quad \leq \sum_{i=1}^{k} \delta\left(\Lambda_{k+1}-\Lambda_{i}\right)^{2} \int_{\Omega} f  u_{i} \omega_{i} e^{\langle\nu,X\rangle_{g_{0}}}dv\\&
\quad+\sum_{i=1}^{k} \frac{\Lambda_{k+1}-\Lambda_{i}}{\delta} \int_{\Omega}\left(\left\langle\nabla f , \nabla u_{i}\right\rangle_{g}+\frac{u_{i}\mathfrak{L}_{\nu} f }{2}\right)^{2} e^{\langle\nu,X\rangle_{g_{0}}}dv \\&
\quad-\sum_{i, j=1}^{k} \delta\left(\Lambda_{k+1}-\Lambda_{i}\right)\left(\Lambda_{i}-\Lambda_{j}\right)^{2} a_{i j}^{2}-\sum_{i, j=1}^{k} \frac{\Lambda_{k+1}-\Lambda_{i}}{\delta} t_{i j}^{2}.
\end{aligned}
\end{equation*}
Hence, we have

\begin{equation}\begin{aligned}\label{sum-3}
&\sum_{i=1}^{k}\left(\Lambda_{k+1}-\Lambda_{i}\right)^{2}\int_{\Omega} u_{i}^{2}|\nabla f |_{g}^{2} e^{\langle\nu,X\rangle_{g_{0}}}dv\\&\leq \sum_{i=1}^{k} \delta\left(\Lambda_{k+1}-\Lambda_{i}\right)^{2} \int_{\Omega} f  u_{i} \omega_{i} e^{\langle\nu,X\rangle_{g_{0}}}dv \\&
  +\sum_{i=1}^{k} \frac{\Lambda_{k+1}-\Lambda_{i}}{\delta} \int_{\Omega}\left(\left\langle\nabla f , \nabla u_{i}\right\rangle_{g}+\frac{u_{i} \mathcal{L}_{\nu} f }{2}\right)^{2} e^{\langle\nu,X\rangle_{g_{0}}}dv.
\end{aligned}
\end{equation}
By direct computation, we have

\begin{equation*}\begin{aligned}&
\int_{\Omega} f  u_{i}\left\langle\nabla f , \nabla\left( \mathfrak{L}_{\nu} u_{i}\right)\right\rangle_{g} e^{\langle\nu,X\rangle_{g_{0}}}dv\\&
=-\int_{\Omega} \mathfrak{L}_{\nu} u_{i} div \left(f  u_{i} e^{\langle\nu,X\rangle_{g_{0}}} \nabla f \right) dv \\&
=-\int_{\Omega} \mathfrak{L}_{\nu} u_{i} e^{\langle\nu,X\rangle_{g_{0}}}\left(u_{i}|\nabla f |_{g}^{2}+f \left\langle\nabla f , \nabla u_{i}\right\rangle_{g}+f  u_{i}\langle\nu, \nabla f \rangle+f
 u_{i} \Delta f \right) dv \\&
=-\int_{\Omega}|\nabla f |_{g}^{2} u_{i} \mathfrak{L}_{\nu} u_{i} e^{\langle\nu,X\rangle_{g_{0}}}dv-\int_{\Omega} f  \mathfrak{L}_{\nu} u_{i}\left\langle\nabla f , \nabla u_{i}\right\rangle_{g} e^{\langle\nu,X\rangle_{g_{0}}}dv-\int_{\Omega} f  u_{i} \mathfrak{L}_{\nu} f  \mathfrak{L}_{\nu} u_{i} e^{\langle\nu,X\rangle_{g_{0}}}dv
\end{aligned}
\end{equation*}
From the above equality and the definition of $\omega_{i},$ we have

\begin{equation}\begin{aligned}\label{sum-4}
&\int_{\Omega} f  u_{i} \omega_{i} e^{\langle\nu,X\rangle_{g_{0}}}dv\\
&=\int_{\Omega} f  u_{i}\Bigg\{2\left\langle\nabla f , \nabla\left( \mathfrak{L}_{\nu} u_{i}\right)\right\rangle_{g}+ \mathfrak{L}_{\nu} f
 \mathfrak{L}_{\nu} u_{i}+2 \mathfrak{L}_{\nu}\left(\left\langle\nabla f , \nabla u_{i}\right\rangle_{g}\right)+ \mathfrak{L}_{\nu}
\left(u_{i} \mathfrak{L}_{\nu} f \right)\Bigg\} e^{\langle\nu,X\rangle_{g_{0}}}dv \\&
=-2 \int_{\Omega}|\nabla f |_{g}^{2} u_{i} \mathfrak{L}_{\nu} u_{i} e^{\langle\nu,X\rangle_{g_{0}}}dv-2 \int_{\Omega} f  \mathfrak{L}_{\nu} u_{i}\left\langle\nabla f , \nabla u_{i}\right\rangle_{g} e^{\langle\nu,X\rangle_{g_{0}}}dv\\&\quad-2 \int_{\Omega} f  u_{i} \mathfrak{L}_{\nu} f  \mathfrak{L}_{\nu} u_{i} e^{\langle\nu,X\rangle_{g_{0}}}dv
+\int_{\Omega} f  u_{i} \mathfrak{L}_{\nu} f  \mathfrak{L}_{\nu} u_{i} e^{\langle\nu,X\rangle_{g_{0}}}dv\\&\quad+2 \int u_{i} \mathfrak{L}_{\nu} f \left\langle\nabla f , \nabla u_{i}\right\rangle
_{g}
 e^{\langle\nu,X\rangle_{g_{0}}}dv+4 \int\left\langle\nabla f , \nabla u_{i}\right\rangle_{g}^{2} e^{\langle\nu,X\rangle_{g_{0}}}dv \\&
\quad+2 \int_{\Omega} f  \mathfrak{L}_{\nu} u_{i}\left\langle\nabla f , \nabla u_{i}\right\rangle_{g} e^{\langle\nu,X\rangle_{g_{0}}}dv+\int u_{i}^{2}\left(\mathfrak{L}_{\nu} f \right)^{2} e^{\langle\nu,X\rangle_{g_{0}}}dv\\&
\quad+\int_{\Omega} f u_{i} \mathfrak{L}_{\nu} f  \mathfrak{L}_{\nu} u_{i} e^{\langle\nu,X\rangle_{g_{0}}}dv+2 \int_{\Omega} u_{i} \mathfrak{L}_{\nu} f \left\langle\nabla f , \nabla u_{i}\right\rangle_{g} e^{\langle\nu,X\rangle_{g_{0}}}dv \\&
=\int_{\Omega}\left(-2|\nabla f |_{g}^{2} u_{i} \mathfrak{L}_{\nu} u_{i}+4 u_{i} \mathfrak{L}_{\nu} f \left\langle\nabla f , \nabla u_{i}\right\rangle_{g}+4\left\langle\nabla f , \nabla u_{i}\right\rangle_{g}^{2}+u_{i}^{2}\left( \mathfrak{L}_{\nu} f
\right)^{2}\right) e^{\langle\nu,X\rangle_{g_{0}}}dv.
\end{aligned}\end{equation}
Introducing \eqref{sum-4} into \eqref{sum-3}, we get \eqref{lem-2.1}.
This completes the proof of lemma \ref{lemma2.1}.
\end{proof}

\section{Proof of Main Results}\label{sec3}

\vskip3mm \noindent In this section, we shall give the proofs of the main results. Throughout this paper, we will agree the following convention on ranges of indices:
\[
1 \leq i, j, \cdots, \leq n ; \quad 1 \leq \alpha, \beta, \cdots, \leq n+p.
\]
Suppose that $\left(\overline{x}^{1}, \cdots, \overline{x}^{n}\right)$ is an arbitrary coordinate system in a neighborhood $U$ of $P$ in $\mathcal{M}^{n}$.
Assume that $x$ with components $x^{\alpha}$ defined by
$
x^{\alpha}=x^{\alpha}\left(\overline{x}^{1}, \cdots, \overline{x}^{n}\right),  1 \leq \alpha \leq n+p,
$
is the position vector of $P$ in $\mathbb{R} ^{n+p}$.

\begin{lem}\label{lem3.1}
For  an $n$-dimensional  submanifold  $\mathcal{M}^{n}$ in Euclidean space
$\mathbb{R}^{n+p}$,   let $x=(x^{1},x^{2},\cdots,x^{n+p})$ is the
position vector of a point $p\in \mathcal{M}^{n}$ with
$x^{\alpha}=x^{\alpha}(\overline{x}_{1}, \cdots, \overline{x}_{n})$, $1\leq \alpha\leq
n+p$, where $(x_{1}, \cdots, x_{n})$ denotes a local coordinate
system of $\mathcal{M}^n$. Then, we have
\begin{equation}\label{n-ine}
\sum^{n+p}_{\alpha=1}\langle\nabla x^{\alpha},\nabla x^{\alpha}\rangle_{g}= n,
\end{equation}
\begin{equation}
\begin{aligned}\label{uw-ine}
\sum^{n+p}_{\alpha=1}\langle\nabla x^{\alpha},\nabla u\rangle_{g}\langle\nabla
x^{\alpha},\nabla w\rangle_{g}=\langle\nabla u,\nabla w\rangle_{g},
\end{aligned}
\end{equation}
for any functions  $u, w\in C^{1}(\mathcal{M}^{n})$,
\begin{equation}
\begin{aligned}\label{nH-ine}
\sum^{n+p}_{\alpha=1}(\Delta x^{\alpha})^{2}=n^{2}H^{2},
\end{aligned}
\end{equation}
\begin{equation}
\begin{aligned}\label{0-ine}
\sum^{n+p}_{\alpha=1}\Delta x^{\alpha}\nabla x^{\alpha}= \textbf{0},
\end{aligned}
\end{equation}
where $H$ is the mean curvature of $\mathcal{M}^{n}$.
\end{lem}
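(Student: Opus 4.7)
\textbf{Proof plan for Lemma \ref{lem3.1}.} The four identities are classical consequences of the fact that $X=(x^{1},\ldots,x^{n+p})$ is an isometric immersion, so my plan is to set up one piece of machinery and then read each identity off from it. Let $\{E_{\alpha}\}_{\alpha=1}^{n+p}$ denote the standard orthonormal basis of $\mathbb{R}^{n+p}$, and fix a local orthonormal frame $\{e_{i}\}_{i=1}^{n}$ on $\mathcal{M}^{n}$. The single computation to carry out first is
\[
e_{i}(x^{\alpha})=\langle e_{i},E_{\alpha}\rangle_{g_{0}},\qquad \nabla x^{\alpha}=\sum_{i=1}^{n}\langle e_{i},E_{\alpha}\rangle_{g_{0}}\,e_{i}=(E_{\alpha})^{\top},
\]
i.e.\ the gradient of the $\alpha$-th coordinate function is precisely the tangential projection of $E_{\alpha}$ onto $T\mathcal{M}^{n}$.

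With this in hand, identities \eqref{n-ine} and \eqref{uw-ine} are immediate from the orthonormality of $\{E_{\alpha}\}$. For \eqref{n-ine}, I would write
\[
\sum_{\alpha=1}^{n+p}|\nabla x^{\alpha}|_{g}^{2}
=\sum_{\alpha=1}^{n+p}\sum_{i=1}^{n}\langle e_{i},E_{\alpha}\rangle_{g_{0}}^{2}
=\sum_{i=1}^{n}|e_{i}|_{g_{0}}^{2}=n,
\]
using $\sum_{\alpha}\langle v,E_{\alpha}\rangle_{g_{0}}^{2}=|v|_{g_{0}}^{2}$ for any vector $v\in\mathbb{R}^{n+p}$. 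For \eqref{uw-ine}, expanding $\nabla u=\sum_{i}e_{i}(u)e_{i}$ and $\nabla w=\sum_{j}e_{j}(w)e_{j}$ and using the same completeness relation $\sum_{\alpha}\langle e_{i},E_{\alpha}\rangle_{g_{0}}\langle e_{j},E_{\alpha}\rangle_{g_{0}}=\langle e_{i},e_{j}\rangle_{g_{0}}=\delta_{ij}$ collapses the double sum to $\sum_{i}e_{i}(u)e_{i}(w)=\langle\nabla u,\nabla w\rangle_{g}$.

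For \eqref{nH-ine} and \eqref{0-ine}, I would invoke the Beltrami-type identity $\Delta X=n\mathbf{H}$, equivalently $\Delta x^{\alpha}=n H^{\alpha}$ where $\mathbf{H}=\sum_{\alpha}H^{\alpha}E_{\alpha}$ is the mean curvature vector (and each $\mathbf{H}$ is normal to $\mathcal{M}^{n}$). Then
\[
\sum_{\alpha=1}^{n+p}(\Delta x^{\alpha})^{2}=n^{2}\sum_{\alpha=1}^{n+p}(H^{\alpha})^{2}=n^{2}|\mathbf{H}|_{g_{0}}^{2}=n^{2}H^{2},
\]
which gives \eqref{nH-ine}. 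For \eqref{0-ine}, combining $\Delta x^{\alpha}=nH^{\alpha}$ with $\nabla x^{\alpha}=(E_{\alpha})^{\top}$ yields
\[
\sum_{\alpha=1}^{n+p}\Delta x^{\alpha}\,\nabla x^{\alpha}
=n\sum_{\alpha=1}^{n+p}H^{\alpha}(E_{\alpha})^{\top}
=n\Bigl(\sum_{\alpha=1}^{n+p}H^{\alpha}E_{\alpha}\Bigr)^{\top}
=n\,\mathbf{H}^{\top}=\mathbf{0},
\]
since $\mathbf{H}$ lies in the normal bundle.

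There is no real obstacle here; each identity is a one-line consequence once the two building blocks $\nabla x^{\alpha}=(E_{\alpha})^{\top}$ and $\Delta X=n\mathbf{H}$ are in place. The only point requiring a bit of care is to justify $\Delta X=n\mathbf{H}$ intrinsically (by applying the Gauss formula $\overline{\nabla}_{e_{i}}e_{i}=\nabla_{e_{i}}e_{i}+h(e_{i},e_{i})$ and summing over $i$), which I would state briefly and then use directly for the last two identities.
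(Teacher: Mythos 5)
Your proof is correct, and it is the standard argument for these identities. Worth noting: the paper itself states Lemma \ref{lem3.1} without proof, treating it as a well-known collection of submanifold identities (it is essentially Reilly's observation that $\nabla x^{\alpha}=(E_{\alpha})^{\top}$ together with the Beltrami formula $\Delta X = n\mathbf{H}$), so there is no in-paper proof to compare against; your two building blocks and the completeness relation $\sum_{\alpha}\langle v,E_{\alpha}\rangle_{g_{0}}\langle w,E_{\alpha}\rangle_{g_{0}}=\langle v,w\rangle_{g_{0}}$ are exactly the canonical route, and each of \eqref{n-ine}--\eqref{0-ine} then falls out in a line as you describe.
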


From \eqref{n-ine}, we have
\begin{equation}
\int_{\Omega} u_{i}^{2} \sum_{\alpha=1}^{n+p}\left|\nabla x_{\alpha}\right|_{g}^{2} e^{\langle\nu,X\rangle_{g_{0}}}dv=n.
\end{equation}
According to \eqref{uw-ine}, one has

\begin{equation}\label{na-ui-2}
\sum_{\alpha=1}^{n+p}\left\langle\nabla x_{\alpha}, \nabla u_{i}\right\rangle_{g}^{2}=\left|\nabla u_{i}\right|_{g}^{2}.\end{equation}
By a direct calculation, we can conclude that,
\begin{equation}\label{v-2}
\sum_{\alpha=1}^{n+p}\left\langle\nabla x_{\alpha}, \nu\right\rangle_{g_{0}}^{2}=|\nu^{\top}|_{g_{0}}^{2}.\end{equation}
Applying Cauchy-Schwarz inequality and \eqref{v-2}, we derive

\begin{equation}\label{uv-ineq}
\sum_{\alpha=1}^{n+p}\left\langle\nabla x_{\alpha}, \nabla u_{i}\right\rangle_{g}\left\langle\nabla x_{\alpha}, \nu\right\rangle_{g_{0}}\leq\left|\nabla u_{i}\right|_{g}|\nu^{\top} |_{g_{0}}.\end{equation}
It follows from \eqref{0-ine} that,
\begin{equation}\label{xx0-ineq}
\sum_{\alpha=1}^{n+p} \Delta x_{\alpha}\left\langle\nabla x_{\alpha}, \nabla u_{i}\right\rangle_{g} =\sum_{\alpha=1}^{n+p}\left\langle\Delta x_{\alpha}\nabla x_{\alpha}, \nabla u_{i}\right\rangle_{g}=0,
\end{equation}
and

\begin{equation}\label{xx-v}
\sum_{\alpha=1}^{n+p} \Delta x_{\alpha}\left\langle\nabla x_{\alpha}, \nu\right\rangle_{g_{0}}=\sum_{\alpha=1}^{n+p}\left\langle\Delta x_{\alpha}\nabla x_{\alpha}, \nu\right\rangle_{g_{0}}=0.
\end{equation}
From \eqref{uv-ineq} and  \eqref{xx0-ineq}, we have

\begin{equation}\label{vxx-u}
\sum_{\alpha=1}^{n+p} \mathfrak{L}_{\nu} x_{\alpha}\left\langle\nabla x_{\alpha}, \nabla u_{i}\right\rangle_{g}=\sum_{\alpha=1}^{n+p}\left(\Delta x_{\alpha}+\left\langle\nabla x_{\alpha}, \nu\right\rangle_{g_{0}}\right)\left\langle\nabla x_{\alpha}, \nabla u_{i}\right\rangle_{g}\leq\left|\nabla u_{i}\right|_{g}|\nu^{\top}|_{g_{0}}
.
\end{equation}

 \begin{lem}\label{lem-Psi} Let $x_{1}, x_{2}, \ldots, x_{n+p}$ be the standard coordinate functions of $\mathbb{R}^{n+p}$. For any $i=1,2,\cdots k$ and $\alpha=1,2,\cdots,n+p$, let

$$\widehat{\Psi}_{i,\alpha}:=\int_{\Omega}\Psi_{i}(x_{\alpha}) e^{\langle\nu,X\rangle_{g_{0}}}dv,$$
where function $\Psi_{i}$ is given by \eqref{Psi}. Then, we have

\begin{equation}\label{sum-W-Psi}
\begin{aligned}\sum^{n+p}_{\alpha=1}\widehat{\Psi}_{i,\alpha}&\leq\int_{\Omega}\left[-2 n u_{i} \mathfrak{L}_{\nu} u_{i} +4\left|\nabla u_{i}\right|_{g}^{2}+u_{i}^{2}\left(n^{2}H^{2}+|\nu^{\top}|_{g_{0}}^{2}\right)\right] e^{\langle\nu,X\rangle_{g_{0}}}dv\\&\ \ \ \ +4 \Lambda_{i}^{\frac{1}{4}}\left(\int_{\Omega}u_{i}^{2}|\nu^{\top}|^{2}_{g_{0}} e^{\langle\nu,X\rangle_{g_{0}}}dv\right)^{\frac{1}{2}} .
\end{aligned}\end{equation}

\end{lem}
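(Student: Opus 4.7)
\medskip
\noindent\textbf{Proof proposal for Lemma \ref{lem-Psi}.} The plan is to evaluate $\Psi_{i}(x_{\alpha})$ pointwise, sum over $\alpha = 1,\dots,n+p$ using the identities collected in Lemma~\ref{lem3.1} together with equations \eqref{na-ui-2}--\eqref{vxx-u}, and then integrate. The four summands in \eqref{Psi} contribute, respectively: from the first, $-2|\nabla x_{\alpha}|_{g}^{2} u_{i}\mathfrak{L}_{\nu}u_{i}$, whose sum over $\alpha$ equals $-2 n u_{i}\mathfrak{L}_{\nu}u_{i}$ by \eqref{n-ine}; from the third, $4\langle\nabla x_{\alpha},\nabla u_{i}\rangle_{g}^{2}$, whose sum equals $4|\nabla u_{i}|_{g}^{2}$ by \eqref{na-ui-2}; and from the fourth, $u_{i}^{2}(\mathfrak{L}_{\nu}x_{\alpha})^{2}$. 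For this last one, expanding $\mathfrak{L}_{\nu}x_{\alpha}=\Delta x_{\alpha}+\langle\nabla x_{\alpha},\nu\rangle_{g_{0}}$ and summing yields $n^{2}H^{2}+|\nu^{\top}|_{g_{0}}^{2}$, since the cross term vanishes by \eqref{xx-v}, using \eqref{nH-ine} and \eqref{v-2}.

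The only delicate piece is the cross term $4 u_{i}\mathfrak{L}_{\nu}x_{\alpha}\langle\nabla x_{\alpha},\nabla u_{i}\rangle_{g}$. Here I would apply \eqref{vxx-u}, which already packages the vanishing of $\sum_{\alpha}\Delta x_{\alpha}\langle\nabla x_{\alpha},\nabla u_{i}\rangle_{g}=0$ from \eqref{xx0-ineq} with the Cauchy--Schwarz control from \eqref{uv-ineq}, to obtain the pointwise inequality
\begin{equation*}
\sum_{\alpha=1}^{n+p} 4 u_{i}\mathfrak{L}_{\nu}x_{\alpha}\langle\nabla x_{\alpha},\nabla u_{i}\rangle_{g}\;\le\; 4|u_{i}|\,|\nabla u_{i}|_{g}\,|\nu^{\top}|_{g_{0}}.
\end{equation*}
Collecting everything, I would arrive at
\begin{equation*}
\sum_{\alpha=1}^{n+p}\Psi_{i}(x_{\alpha}) \;\le\; -2n\,u_{i}\mathfrak{L}_{\nu}u_{i}+4|\nabla u_{i}|_{g}^{2}+u_{i}^{2}\bigl(n^{2}H^{2}+|\nu^{\top}|_{g_{0}}^{2}\bigr)+4|u_{i}|\,|\nabla u_{i}|_{g}\,|\nu^{\top}|_{g_{0}},
\end{equation*}
and integrate against the weight $e^{\langle\nu,X\rangle_{g_{0}}}\,dv$.

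The main remaining obstacle is to convert the weighted $L^{1}$-norm of $4|u_{i}|\,|\nabla u_{i}|_{g}\,|\nu^{\top}|_{g_{0}}$ into the cleaner expression $4\Lambda_{i}^{1/4}\bigl(\int_{\Omega} u_{i}^{2}|\nu^{\top}|_{g_{0}}^{2}e^{\langle\nu,X\rangle_{g_{0}}}dv\bigr)^{1/2}$ that appears in \eqref{sum-W-Psi}. I would split the three factors via Cauchy--Schwarz,
\begin{equation*}
\int_{\Omega}|u_{i}|\,|\nabla u_{i}|_{g}\,|\nu^{\top}|_{g_{0}}e^{\langle\nu,X\rangle_{g_{0}}}dv \;\le\; \Bigl(\int_{\Omega}|\nabla u_{i}|_{g}^{2}\,e^{\langle\nu,X\rangle_{g_{0}}}dv\Bigr)^{\!1/2}\Bigl(\int_{\Omega}u_{i}^{2}|\nu^{\top}|_{g_{0}}^{2}e^{\langle\nu,X\rangle_{g_{0}}}dv\Bigr)^{\!1/2}\!,
\end{equation*}
and then control the Dirichlet-type factor by the eigenvalue itself. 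The route is standard: integration by parts (self-adjointness of $\mathfrak{L}_{\nu}$ for the weighted measure) gives $\int_{\Omega}|\nabla u_{i}|_{g}^{2}e^{\langle\nu,X\rangle_{g_{0}}}dv = -\int_{\Omega}u_{i}\mathfrak{L}_{\nu}u_{i}\,e^{\langle\nu,X\rangle_{g_{0}}}dv$, and a further Cauchy--Schwarz, combined with the normalization $\int_{\Omega}u_{i}^{2}e^{\langle\nu,X\rangle_{g_{0}}}dv = 1$ and the identity $\int_{\Omega}(\mathfrak{L}_{\nu}u_{i})^{2}e^{\langle\nu,X\rangle_{g_{0}}}dv = \int_{\Omega}u_{i}\mathfrak{L}_{\nu}^{2}u_{i}\,e^{\langle\nu,X\rangle_{g_{0}}}dv = \Lambda_{i}$, yields $\int_{\Omega}|\nabla u_{i}|_{g}^{2}e^{\langle\nu,X\rangle_{g_{0}}}dv \le \Lambda_{i}^{1/2}$. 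Substituting produces precisely the factor $4\Lambda_{i}^{1/4}$ in \eqref{sum-W-Psi}, completing the bound.
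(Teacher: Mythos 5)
Your proposal matches the paper's own proof step for step: the same pointwise decomposition via Lemma~\ref{lem3.1} and identities \eqref{na-ui-2}--\eqref{vxa}, the same Cauchy--Schwarz to split the cross term $4 u_i\,\mathfrak{L}_\nu x_\alpha\langle\nabla x_\alpha,\nabla u_i\rangle_g$, and the same integration-by-parts plus Cauchy--Schwarz bound $\int_\Omega|\nabla u_i|_g^2\,e^{\langle\nu,X\rangle_{g_0}}dv\le\Lambda_i^{1/2}$ (which the paper records as \eqref{cs-ineq}). The only cosmetic difference is that you write $|u_i|$ where the paper writes $u_i$ in the intermediate estimate, which is in fact the slightly more careful form.
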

\begin{proof}Taking $f=x_{\alpha}$ in \eqref{Psi} and summing over $\alpha$ from 1 to $n+p,$ we get

\begin{equation}\label{sum-Psi-1}
\begin{aligned}\sum^{n+p}_{\alpha=1}\widehat{\Psi}_{i,\alpha}&=\sum^{n+p}_{\alpha=1}\int_{\Omega}\left(-2\left|\nabla x_{\alpha}\right|_{g}^{2} u_{i} \mathfrak{L}_{\nu} u_{i}+4 u_{i} \mathfrak{L}_{\nu} x_{\alpha}\left\langle\nabla x_{\alpha}, \nabla u_{i}\right\rangle_{g}\right.\\&\quad\quad\quad\quad\quad\left.+4\left\langle\nabla x_{\alpha}, \nabla u_{i}\right\rangle_{g}^{2}+u_{i}^{2}\left( \mathfrak{L}_{\nu} x_{\alpha}\right)^{2}\right) e^{\langle\nu,X\rangle_{g_{0}}}dv.
\end{aligned}\end{equation}
From \eqref{v-2}, \eqref{xx-v} and \eqref{nH-ine}, we have

\begin{equation}\begin{aligned}\label{vxa}
\sum_{\alpha=1}^{n+p}\left( \mathfrak{L}_{\nu} x_{\alpha}\right)^{2}&=\sum_{\alpha=1}^{n+p}\left(\Delta x_{\alpha}+\left\langle\nabla x_{\alpha}, \nu\right\rangle_{g_{0}}\right)^{2} \\&
=\sum_{\alpha=1}^{n+p}\left(\left(\Delta x_{\alpha}\right)^{2}+2 \Delta x_{\alpha}\left\langle\nabla x_{\alpha}, \nu\right\rangle_{g_{0}}+\left\langle\nabla x_{\alpha}, \nu\right\rangle_{g_{0}}^{2}\right) \\&
= n^{2}H^{2}+|\nu^{\top}|_{g_{0}}^{2}.
\end{aligned}\end{equation}
From \eqref{n-ine},  \eqref{vxx-u},  \eqref{na-ui-2}, \eqref{vxa}  and \eqref{sum-Psi-1}, we get

\begin{equation*}
\begin{aligned}\sum^{n+p}_{\alpha=1}\widehat{\Psi}_{i,\alpha}&\leq\int_{\Omega}\left(-2 n u_{i} \mathfrak{L}_{\nu} u_{i}+4 u_{i}
\left|\nabla u_{i}\right|_{g}\left|\nu^{\top}\right|_{g_{0}}
+4\left|\nabla u_{i}\right|_{g}^{2}+u_{i}^{2}\left(n^{2}H^{2}+|\nu^{\top}|_{g_{0}}^{2}\right)\right) e^{\langle\nu,X\rangle_{g_{0}}}dv.
\end{aligned}\end{equation*}
By Cauchy-Schwarz inequality, we have
\begin{equation}\label{Cau-Sch}
\begin{aligned}\sum^{n+p}_{\alpha=1}\widehat{\Psi}_{i,\alpha}&\leq\int_{\Omega}\left[-2 n u_{i} \mathfrak{L}_{\nu} u_{i} +4\left|\nabla u_{i}\right|_{g}^{2}+u_{i}^{2}\left(n^{2}H^{2}+|\nu^{\top}|_{g_{0}}^{2}\right)\right] e^{\langle\nu,X\rangle_{g_{0}}}dv\\&\ \ \ \ +4\left(\int_{\Omega}(u_{i}|\nu^{\top}|_{g_{0}})^{2} e^{\langle\nu,X\rangle_{g_{0}}}dv\right)^{\frac{1}{2}}\left(\int_{\Omega}|\nabla u_{i}|_{g}^{2} e^{\langle\nu,X\rangle_{g_{0}}}dv\right)^{\frac{1}{2}}.
\end{aligned}\end{equation} By divergence theorem and Cauchy-Schwarz inequality, we conclude that

\begin{equation}\begin{aligned}\label{cs-ineq}
\int_{\Omega}\left|\nabla u_{i}\right|_{g}^{2} e^{\langle\nu,X\rangle_{g_{0}}}dv&
=-\int_{\Omega} u_{i} \mathfrak{L}_{\nu} u_{i} e^{\langle\nu,X\rangle_{g_{0}}}dv \\&\leq\left\{\int_{\Omega} u_{i}^{2} e^{\langle\nu,X\rangle_{g_{0}}}dv
\right\}^{\frac{1}{2}}\left\{\int_{\Omega}\left( \mathfrak{L}_{\nu} u_{i}\right)^{2} e^{\langle\nu,X\rangle_{g_{0}}}dv\right\}^{\frac{1}{2}}\\&=\Lambda_{i}^{\frac{1}{2}}.
\end{aligned}\end{equation}
Thus, from \eqref{Cau-Sch} and \eqref{cs-ineq}, we have

\begin{equation*}
\begin{aligned}\sum^{n+p}_{\alpha=1}\widehat{\Psi}_{i,\alpha}&\leq\int_{\Omega}\left[-2 n u_{i} \mathfrak{L}_{\nu} u_{i} +4\left|\nabla u_{i}\right|_{g}^{2}+u_{i}^{2}\left(n^{2}H^{2}+|\nu^{\top}|_{g_{0}}^{2}\right)\right]e^{\langle\nu,X\rangle_{g_{0}}}dv\\&\ \ \ \ +4 \Lambda_{i}^{\frac{1}{4}}\left(\int_{\Omega}(u_{i}|\nu^{\top}|_{g_{0}})^{2} e^{\langle\nu,X\rangle_{g_{0}}}dv\right)^{\frac{1}{2}},
\end{aligned}\end{equation*}
which gives \eqref{sum-W-Psi}. Thus, it finishes the proof of lemma \ref{lem-Psi}.

\end{proof}
\begin{lem}\label{lem-theta}Let $x_{1}, x_{2}, \ldots, x_{n+p}$ be the standard coordinate functions of $\mathbb{R}^{n+p}$. For any $i=1,2,\cdots k$ and $\alpha=1,2,\cdots,n+p$, let $$\widehat{\Theta}_{i,\alpha}:=\int_{\Omega}\Theta_{i}(x_{\alpha}) e^{\langle\nu,X\rangle_{g_{0}}}dv,$$
where function $\Psi_{i}$ is given by \eqref{Theta}. Then, we have

\begin{equation}\begin{aligned}\label{sum-theda-iden} \sum^{n+p}_{\alpha=1}\widehat{\Theta}_{i,\alpha}
&\leq\int_{\Omega}\left[\left|\nabla u_{i}\right|_{g}^{2}+\frac{1}{4} u_{i}^{2}\left(n^{2}H^{2}+|\nu^{\top}|_{g_{0}}^{2}\right)\right] e^{\langle\nu,X\rangle_{g_{0}}}dv\\&\ \ \ \ +\Lambda_{i}^{\frac{1}{4}}\left[\int_{\Omega}(u_{i}|\nu^{\top}|_{g_{0}})^{2} e^{\langle\nu,X\rangle_{g_{0}}}dv\right]^{\frac{1}{2}}.\end{aligned}\end{equation}

\end{lem}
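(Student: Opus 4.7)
The plan is to mirror the proof of Lemma~\ref{lem-Psi}, substituting the coordinate functions $f=x_{\alpha}$ into the definition \eqref{Theta} of $\Theta_{i}$, expanding the square, summing over $\alpha$, and then applying the identities collected in Lemma~\ref{lem3.1} together with estimates \eqref{na-ui-2}, \eqref{vxx-u}, \eqref{vxa}. The final step will be a Cauchy--Schwarz argument combined with \eqref{cs-ineq} to produce the factor $\Lambda_{i}^{1/4}$.

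First, I would set $f = x_{\alpha}$ in \eqref{Theta} and expand:
\[
\sum_{\alpha=1}^{n+p}\Theta_{i}(x_{\alpha}) = \sum_{\alpha=1}^{n+p}\langle\nabla x_{\alpha},\nabla u_{i}\rangle_{g}^{2} + u_{i}\sum_{\alpha=1}^{n+p}\mathfrak{L}_{\nu}x_{\alpha}\,\langle\nabla x_{\alpha},\nabla u_{i}\rangle_{g} + \frac{u_{i}^{2}}{4}\sum_{\alpha=1}^{n+p}(\mathfrak{L}_{\nu}x_{\alpha})^{2}.
\]
Then I apply \eqref{na-ui-2} to rewrite the first sum as $|\nabla u_{i}|_{g}^{2}$, and \eqref{vxa} to rewrite the third sum as $n^{2}H^{2}+|\nu^{\top}|_{g_{0}}^{2}$. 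For the cross term, \eqref{xx0-ineq} kills the $\Delta x_{\alpha}$-part of $\mathfrak{L}_{\nu}x_{\alpha}$, and what remains is bounded by \eqref{vxx-u} (via Cauchy--Schwarz on $\mathbb{R}^{n+p}$) by $|\nabla u_{i}|_{g}\,|\nu^{\top}|_{g_{0}}$. Multiplying by $u_{i}$ (with absolute value, since $u_{i}$ may change sign) yields the pointwise estimate
\[
\sum_{\alpha=1}^{n+p}\Theta_{i}(x_{\alpha}) \leq |\nabla u_{i}|_{g}^{2} + |u_{i}|\,|\nabla u_{i}|_{g}\,|\nu^{\top}|_{g_{0}} + \frac{u_{i}^{2}}{4}\bigl(n^{2}H^{2}+|\nu^{\top}|_{g_{0}}^{2}\bigr).
\]

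Next, I integrate against the weighted measure $e^{\langle\nu,X\rangle_{g_{0}}}dv$. The first and third terms yield the bracketed integral on the right-hand side of \eqref{sum-theda-iden}. For the middle term, I apply Cauchy--Schwarz in $L^{2}(\Omega, e^{\langle\nu,X\rangle_{g_{0}}}dv)$ to split it as
\[
\int_{\Omega}|u_{i}|\,|\nu^{\top}|_{g_{0}}\,|\nabla u_{i}|_{g}\,e^{\langle\nu,X\rangle_{g_{0}}}dv \leq \Bigl(\int_{\Omega}u_{i}^{2}|\nu^{\top}|_{g_{0}}^{2}\,e^{\langle\nu,X\rangle_{g_{0}}}dv\Bigr)^{1/2}\Bigl(\int_{\Omega}|\nabla u_{i}|_{g}^{2}\,e^{\langle\nu,X\rangle_{g_{0}}}dv\Bigr)^{1/2},
\]
and then invoke \eqref{cs-ineq} to bound $\int_{\Omega}|\nabla u_{i}|_{g}^{2}\,e^{\langle\nu,X\rangle_{g_{0}}}dv \leq \Lambda_{i}^{1/2}$, so the second factor is at most $\Lambda_{i}^{1/4}$. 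Collecting terms produces exactly \eqref{sum-theda-iden}.

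I do not anticipate a genuine obstacle: the argument is structurally identical to the proof of Lemma~\ref{lem-Psi}, just for the simpler quadratic expression $\Theta_{i}(f)$ rather than $\Psi_{i}(f)$. The only minor care-point is keeping track of the sign in the cross term (using $|u_{i}|$ rather than $u_{i}$ before applying Cauchy--Schwarz), which is automatic once one decides to estimate $u_{i}\sum_{\alpha}\mathfrak{L}_{\nu}x_{\alpha}\langle\nabla x_{\alpha},\nabla u_{i}\rangle_{g}$ from above.
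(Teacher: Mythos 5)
Your proposal follows the same route as the paper: set $f = x_{\alpha}$ in \eqref{Theta}, expand the square, sum over $\alpha$, invoke \eqref{na-ui-2}, \eqref{vxa} and \eqref{vxx-u} (with \eqref{xx0-ineq}) for the three pieces, then integrate, split the cross term by Cauchy--Schwarz in the weighted $L^2$ space, and apply \eqref{cs-ineq}. This matches the paper's argument step for step; your explicit attention to the sign of $u_i$ in the cross term is a small tidiness improvement over the paper's phrasing (harmless there because the term is squared before Cauchy--Schwarz is applied), not a different method.
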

\begin{proof}
Taking $f=x_{\alpha}$ in \eqref{Theta} and summing over $\alpha$ from 1 to $n+p$, we get

\begin{equation}\sum^{n+p}_{\alpha=1}\widehat{\Theta}_{i,\alpha}=\sum^{n+p}_{\alpha=1}\int_{\Omega}\left(\left\langle\nabla x_{\alpha}, \nabla u_{i}\right\rangle_{g}
+\frac{u_{i} \mathfrak{L}_{\nu} x_{\alpha}}{2}\right)^{2} e^{\langle\nu,X\rangle_{g_{0}}}dv.
\end{equation}
Utilizing  \eqref{vxx-u},  \eqref{na-ui-2}, \eqref{vxa}  and \eqref{sum-Psi-1}, we infer that

\begin{equation}\begin{aligned}\label{no-sum-Theta}
&\sum_{\alpha=1}^{n+p}\left(\left\langle\nabla x_{\alpha}, \nabla u_{i}\right\rangle_{g}
+\frac{u_{i} \mathfrak{L}_{\nu} x_{\alpha}}{2}\right)^{2}\\&=\sum_{\alpha=1}^{n+p}\left[\left\langle\nabla x_{\alpha}, \nabla u_{i}\right\rangle_{g}^{2}+u_{i} \mathfrak{L}_{\nu} x_{\alpha}\left\langle\nabla x_{\alpha}, \nabla u_{i}\right\rangle_{g}+\frac{1}{4}\left(u_{i} \mathfrak{L}_{\nu} x_{\alpha}\right)^{2}\right]\\&
\leq\left|\nabla u_{i}\right|_{g}^{2}+u_{i}\left|\nabla u_{i}\right|_{g}|\nu^{\top}|_{g_{0}}+\frac{1}{4} u_{i}^{2}\left(n^{2}H^{2}+|\nu^{\top}|_{g_{0}}^{2}\right).
\end{aligned}\end{equation}
By the definition of $\widehat{\Theta}_{i,\alpha}$ and \eqref{no-sum-Theta}, we obtain

\begin{equation}\begin{aligned}\sum^{n+p}_{\alpha=1}\widehat{\Theta}_{i,\alpha}\leq\int_{\Omega}\left[\left|\nabla u_{i}\right|_{g}^{2}+u_{i}\left|\nabla u_{i}\right|_{g}
|\nu^{\top}|_{g_{0}}+\frac{1}{4} u_{i}^{2}\left(n^{2}H^{2}+|\nu^{\top}|_{g_{0}}^{2}\right)\right] e^{\langle\nu,X\rangle_{g_{0}}}dv.\end{aligned}\end{equation}
By Cauchy-Schwarz inequality, we infer that

\begin{equation}\begin{aligned}\label{div-sum-Theta}\sum^{n+p}_{\alpha=1}\widehat{\Theta}_{i,\alpha}
&\leq\int_{\Omega}\left[\left|\nabla u_{i}\right|_{g}^{2}+\frac{1}{4} u_{i}^{2}\left(n^{2}H^{2}+|\nu^{\top}|_{g_{0}}^{2}\right)\right]e^{\langle\nu,X\rangle_{g_{0}}}dv\\&\ \ \ \ +\left(\int_{\Omega}(u_{i}|\nu^{\top}|_{g_{0}})^{2} e^{\langle\nu,X\rangle_{g_{0}}}dv\right)^{\frac{1}{2}}\left(\int_{\Omega}|\nabla u_{i}|_{g}^{2} e^{\langle\nu,X\rangle_{g_{0}}}dv\right)^{\frac{1}{2}}.\end{aligned}\end{equation}
 From \eqref{cs-ineq} and \eqref{div-sum-Theta}, we obtain

\begin{equation*}\begin{aligned} \sum^{n+p}_{\alpha=1}\widehat{\Theta}_{i,\alpha}
&\leq\int_{\Omega}\left[\left|\nabla u_{i}\right|_{g}^{2}+\frac{1}{4} u_{i}^{2}\left(n^{2}H^{2}+|\nu^{\top}|_{g_{0}}^{2}\right)\right]e^{\langle\nu,X\rangle_{g_{0}}}dv\\&\ \ \ \ +\Lambda_{i}^{\frac{1}{4}}\left(\int_{\Omega}(u_{i}|\nu^{\top}|_{g_{0}})^{2} e^{\langle\nu,X\rangle_{g_{0}}}dv\right)^{\frac{1}{2}}.\end{aligned}\end{equation*}
Therefore, we finish the proof  of lemma \ref{lem-theta}.

\end{proof}

In order to prove theorem \ref{thm1.1}, we need the following embedding theorem due to Nash.

\begin{Nash}  Each complete Riemannian manifold $\mathcal{M}^{n}$ can be isometrically immersed
into a Euclidian space  $\mathbb{R}^{n+p}$.\end{Nash}

Next, we would like to give the proof of theorem \ref{thm1.1}.

\noindent\emph{Proof of theorem} \ref{thm1.1}. Since $\mathcal{M}^{n}$ is a complete Riemannian manifold, Nash's theorem implies that there exists an isometric immersion from $M^{n}$ into a Euclidean space $\mathbb{R}^{n+p}.$ Thus, $\mathcal{M}^{n}$ can be considered as an $n$ -dimensional complete isometrically immersed submanifold in $\mathbb{R} ^{n+p}$.
According to lemma \ref{lemma2.1}, the definitions of $\widehat{\Psi}_{i,\alpha}$  and $\widehat{\Theta}_{i,\alpha} $, we have

\begin{equation}\begin{aligned}\label{Sum-L1}
\sum_{i=1}^{k}\left(\Lambda_{k+1}-\Lambda_{i}\right)^{2} \int_{\Omega} u_{i}^{2}|\nabla x_{\alpha} |^{2} e^{\langle\nu,X\rangle_{g_{0}}}dv
&\leq \sum_{i=1}^{k}\delta\left(\Lambda_{k+1}-\Lambda_{i}\right)^{2}
\widehat{\Psi}_{i,\alpha} \\&+\sum_{i=1}^{k}\frac{\left(\Lambda_{k+1}-\Lambda_{i}\right)}{\delta} \widehat{\Theta}_{i,\alpha}.
\end{aligned}\end{equation}By \eqref{n-ine}, we have
 \begin{equation}\label{left-1} \sum_{\alpha=1}^{n+p}\int_{\Omega} u_{i}^{2}\left|\nabla x_{\alpha}\right|_{g}^{2} e^{\langle\nu,X\rangle_{g_{0}}}dv=n. \end{equation}
Using \eqref{left-1}, and summing over $\alpha$ from 1 to $n+p$ for \eqref{Sum-L1}, one has

\begin{equation}\begin{aligned}\label{SUM-1-1}
n\sum_{i=1}^{k}\left(\Lambda_{k+1}-\Lambda_{i}\right)^{2}&
\leq \sum_{i=1}^{k}\sum_{\alpha=1}^{n+p}\delta\left(\Lambda_{k+1}-\Lambda_{i}\right)^{2}
\widehat{\Psi}_{i,\alpha} +\sum_{i=1}^{k} \sum_{\alpha=1}^{n+p} \frac{\left(\Lambda_{k+1}-\Lambda_{i}\right)}{\delta} \widehat{\Theta}_{i,\alpha}\\&=\sum_{i=1}^{k}\delta\left(\Lambda_{k+1}-\Lambda_{i}\right)^{2}
\sum_{\alpha=1}^{n+p}\widehat{\Psi}_{i,\alpha} +\sum_{i=1}^{k} \frac{\left(\Lambda_{k+1}-\Lambda_{i}\right)}{\delta} \sum_{\alpha=1}^{n+p} \widehat{\Theta}_{i,\alpha}.
\end{aligned}\end{equation}Next, we estimate the upper bounds for $\widehat{\Psi}_{i,\alpha}$  and $\widehat{\Theta}_{i,\alpha} $.
Since eigenvalues are invariant under isometries, letting

\begin{equation*}
C_{1}=\frac{1}{4}\inf _{\sigma \in \Pi}\max_{\Omega}\left(n^{2}H^{2}\right),
\end{equation*}and

$$\widetilde{C}_{1}=\frac{1}{4}\max_{\Omega} |\nu^{\top}|_{g_{0}} ,$$
where $\Pi$ denotes the set of all isometric immersions from $\mathcal{M}^{n}$ into a Euclidean space, it follows from \eqref{sum-W-Psi}, \eqref{cs-ineq} and \eqref{sum-theda-iden} that,

\begin{equation} \label{wid-PSI} \sum_{\alpha=1}^{n+p}\widehat{\Psi}_{i,\alpha}\leq(2 n+4) \Lambda_{i}^{\frac{1}{2}}+4\left(4\widetilde{C}_{1}\Lambda_{1}^{\frac{1}{4}}+4\widetilde{C}_{1}^{2}+C_{1}\right),\end{equation}and

\begin{equation} \label{wid-THETA} \sum_{\alpha=1}^{n+p}\widehat{\Theta}_{i,\alpha}\leq  \Lambda_{i}^{\frac{1}{2}}+4\widetilde{C}_{1}\Lambda_{1}^{\frac{1}{4}}+4\widetilde{C}_{1}^{2}+C_{1}.\end{equation}
Substituting \eqref{wid-PSI} and \eqref{wid-THETA} into \eqref{SUM-1-1}, we have

\begin{equation*}\begin{aligned}
n \sum_{i=1}^{k}\left(\Lambda_{k+1}-\Lambda_{i}\right)^{2}&
\leq  \sum_{i=1}^{k} \delta\left(\Lambda_{k+1}-\Lambda_{i}\right)^{2}\left[(2 n+4) \Lambda_{i}^{\frac{1}{2}}+4\overline{C}_{1}\right]\\&\quad+\sum_{i=1}^{k} \frac{\Lambda_{k+1}-\Lambda_{i}}{\delta}\left(\Lambda_{i}^{\frac{1}{2}}+\overline{C}_{1}\right),
\end{aligned}\end{equation*}where

$$\overline{C}_{1}=4\widetilde{C}_{1}\Lambda_{1}^{\frac{1}{4}}+4\widetilde{C}_{1}^{2}+C_{1}.$$
In above inequality, taking

\begin{equation*}
\delta=\frac{\left[\sum_{i=1}^{k}\left(\Lambda_{k+1}-\Lambda_{i}\right)\left(\Lambda_{i}^{\frac{1}{2}}+\overline{C}_{1}\right)\right]^{\frac{1}{2}}}{\left[\sum_{i=1}^{k}\left(\Lambda_{k+1}-\Lambda_{i}\right)^{2}\left((2 n+4) \Lambda_{i}^{\frac{1}{2}}+4\overline{C}_{1}\right)\right]^{\frac{1}{2}}},
\end{equation*}
we can get \eqref{thm-1.1-ineq}. Hence, we finish the proof of theorem \ref{thm1.1}.
$$\eqno\Box$$Next, we give the proof of theorem \ref{thm1.2}.
\vskip 3mm

\noindent\emph{Proof of theorem} \ref{thm1.2}. Since $\mathcal{M}^{n}$ is an $n$-dimensional  complete  translator isometrically embedded into the Euclidean space $\mathbb{R}^{n+p}$, we have \begin{equation}\label{3.6-1}\textbf{H}=\nu_{0}^{\perp}, \end{equation} and

\begin{equation}\label{3.6-2} |\nu_{0}|^{2}=1,\end{equation}  which implies that \begin{equation}\label{3.6-7}n^{2}H^{2}+|\nu_{0}^{\top}|^{2}= n^{2}|\nu_{0}^{\perp}|^{2}+|\nu_{0}^{\top}|^{2}\leq n^{2},\end{equation}and

\begin{equation}\label{3.6-3}|\nu_{0}^{\top}|\leq1.
\end{equation}
Uniting \eqref{3.6-1}, \eqref{3.6-2}, \eqref{3.6-3} and \eqref{3.6-7}, we yield

\begin{equation}\label{3.8}\frac{1}{4}\int_{\Omega}u_{i}^{2}\left(n^{2}H^{2} +|\nu_{0}^{\top}|^{2}+4\Lambda^{\frac{1}{4}}_{i}|\nu_{0}^{\top}|\right)e^{\langle\nu_{0},X\rangle_{g_{0}}}dv\leq \frac{1}{4}\max_{\Omega}\left(n^{2}+4\Lambda^{\frac{1}{4}}_{i}\right).\end{equation} Substituting \eqref{3.8} into \eqref{thm-1.1-ineq}, we obtain \eqref{thm-1.1-ineq-trans}. Therefore, we finish the proof of theorem \ref{thm1.2}.
$$\eqno\Box$$

\section{Some Further Applications}\label{sec4}
In this section, we would like to give some applications of our theorem \ref{thm1.1}. Specially, we establish the eigenvalue inequalities on the minimal submanifolds of the
Euclidean space, submanifolds on the unit sphere and projective spaces.

First, let us suppose that $(\mathcal{M}^{n},g)$ is an $n$-dimensional complete minimal submanifold isometrically embedded into the Euclidean space $\mathbb{R}^{n+p}$, then the mean curvature vanishes. Furthermore, by theorem \ref{thm1.1}, we can deduce the following corollary.
\begin{corr}\label{corr1.1}
Let $(\mathcal{M}^{n},g)$ be an $n$-dimensional complete minimal submanifold isometrically embedded into the Euclidean space $\mathbb{R}^{n+p}$, then
eigenvalues $\Lambda
_{i}$ of the clamped plate problem \eqref{L-2-prob} of  the $\mathfrak{L}_{\nu}^{2}$ operator satisfy
\begin{equation}\label{thm-1.1-ineq-1}\begin{aligned}
\sum_{i=1}^{k}\left(\Lambda_{k+1}-\Lambda_{i}\right)^{2}
\leq & \frac{4}{n}\left\{\sum_{i=1}^{k}\left(\Lambda_{k+1}-\Lambda_{i}\right)^{2}\left(\left(\frac{n}{2}+1\right) \Lambda_{i}^{\frac{1}{2}}+4\widetilde{C}_{1}\Lambda_{1}^{\frac{1}{4}}+4\widetilde{C}_{1}^{2}\right)\right\}^{\frac{1}{2}} \\
& \times\left\{\sum_{i=1}^{k}\left(\Lambda_{k+1}-\Lambda_{i}\right)\left(\Lambda_{i}^{\frac{1}{2}}+4\widetilde{C}_{1}\Lambda_{1}^{\frac{1}{4}}+4\widetilde{C}_{1}^{2}\right)\right\}^{\frac{1}{2}},
\end{aligned}\end{equation}
where $C_{1}$ is a constant given by
$$
C_{1}=\frac{1}{4}\max_{\Omega}|\nu^{\top}|_{g_{0}}.
$$
\end{corr}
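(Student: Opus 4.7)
The plan is to obtain Corollary \ref{corr1.1} as a direct specialization of Theorem \ref{thm1.1}, exploiting the hypothesis that $\mathcal{M}^n$ is minimally immersed in $\mathbb{R}^{n+p}$. The key observation is that minimality forces the mean curvature vector field $\textbf{H}$ to vanish identically, and hence $H = |\textbf{H}| \equiv 0$ on $\mathcal{M}^n$. Consequently, the quantity
\[
C_{1} = \frac{1}{4}\inf_{\sigma \in \Pi}\max_{\Omega}\bigl(n^{2}H^{2}\bigr)
\]
appearing in Theorem \ref{thm1.1} can be taken to be $0$, since the given isometric embedding itself already realizes $n^2 H^2 \equiv 0$, making the infimum over $\Pi$ vanish regardless of other isometric immersions one might consider.

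Given this, my plan is first to invoke Theorem \ref{thm1.1}, which is applicable because a minimal submanifold of Euclidean space is in particular isometrically embedded into $\mathbb{R}^{n+p}$ with a well-defined mean curvature $H$. Then I would substitute $C_1 = 0$ directly into inequality \eqref{thm-1.1-ineq}. The terms $4\widetilde{C}_1 \Lambda_1^{1/4} + 4\widetilde{C}_1^2 + C_1$ collapse to $4\widetilde{C}_1 \Lambda_1^{1/4} + 4\widetilde{C}_1^2$, which is precisely the expression appearing in the corollary's inequality \eqref{thm-1.1-ineq-1}. The factor $\bigl(\tfrac{n}{2}+1\bigr)\Lambda_i^{1/2}$ and the prefactor $\tfrac{4}{n}$ are inherited unchanged from Theorem \ref{thm1.1}. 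The quantity $\widetilde{C}_1 = \tfrac{1}{4}\max_{\Omega}|\nu^{\top}|_{g_0}$ remains as defined.

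Since the derivation is purely a substitution, there is no genuine obstacle; the only subtlety worth a sentence of commentary is verifying that the infimum over $\Pi$ is indeed realized (or majorized) by the given minimal embedding, so that the vanishing of $H$ for this particular immersion legitimately yields $C_1 = 0$. This is immediate from the definition of the infimum.
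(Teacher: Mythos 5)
Your proposal is correct and is exactly the paper's own argument: minimality of the immersion forces $H\equiv 0$, hence $C_1=\tfrac14\inf_{\sigma\in\Pi}\max_\Omega(n^2H^2)=0$, and substituting this into inequality \eqref{thm-1.1-ineq} of Theorem~\ref{thm1.1} yields \eqref{thm-1.1-ineq-1}. (One small remark: in the corollary's statement the constant labeled $C_1$ with value $\tfrac14\max_\Omega|\nu^\top|_{g_0}$ is a typo for $\widetilde{C}_1$, which is the quantity actually appearing in the inequality; your reading, keeping $\widetilde{C}_1$ as in Theorem~\ref{thm1.1} and setting $C_1=0$, is the intended one.)
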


Suppose  that $(\mathcal{M}^{n},g)$ is an $n$-dimensional submanifold isometrically immersed in the unit sphere $\mathbb{S}^{n+p-1}(1) \subset \mathbb{R}^{n+p}$  with mean curvature vector $ \overline{\textbf{H}}$. Let $\Omega$ be a bounded domain on the Riemannian manifolds $\mathcal{M}^{n}$. Next, we consider the eigenvalue problem of $\mathcal{L}_{\nu}^{2}$ operator on the Riemannian manifolds $\mathcal{M}^{n}$.  By theorem \ref{thm1.1}, we have the following corollary.
\begin{corr}\label{corr-6.2}
If $(\mathcal{M}^{n},g)$ be an $n$-dimensional submanifold isometrically immersed in the unit sphere $\mathbb{S}^{n+p-1}(1) \subset \mathbb{R}^{n+p}$  with mean curvature vector $\overline{\textbf{H}}$. Then,
eigenvalues of the clamped plate problem \eqref{L-2-prob} of  the $\mathfrak{L}_{\nu}^{2}$ operator satisfy
\begin{equation}\begin{aligned}\label{Sub-Sph}
\sum_{i=1}^{k}\left(\Lambda_{k+1}-\Lambda_{i}\right)^{2} \leq &\frac{4}{n} \left\{\sum_{i=1}^{k}\left(\Lambda_{k+1}-\Lambda_{i}\right)^{2}\left[\left( \frac{n}{2}+1\right) \Lambda_{i}^{\frac{1}{2}}+4\widetilde{C}_{2} \Lambda_{i}^{\frac{1}{4}}+4\widetilde{C}_{2}^{2}+C_{2}\right]\right\}^{\frac{1}{2}}\\
&\quad\quad\times\left\{ \sum_{i=1}^{k}\left(\Lambda_{k+1}-\Lambda_{i}\right)\left(\Lambda_{i}^{\frac{1}{2}}+ 4\widetilde{C} _{2} \Lambda_{i}^{\frac{1}{4}}+4\widetilde{C}_{2}^{2}+C_{2}\right)\right\}^{\frac{1}{2}},
\end{aligned}\end{equation}where

$$C_{2}=\frac{1}{4}\inf_{\sigma\in\Pi}\max_{\Omega}n^{2}(|\overline{\textbf{H}}|^{2}+1),$$
and

$$\widetilde{C}_{2}=\frac{1}{4}\max_{\Omega}|\nu^{\top}|_{g_{0}}.$$
\end{corr}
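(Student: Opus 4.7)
The plan is to derive Corollary \ref{corr-6.2} as a direct consequence of Theorem \ref{thm1.1} by exploiting the standard relationship between the extrinsic geometry of $\mathcal{M}^{n}$ inside the unit sphere $\mathbb{S}^{n+p-1}(1)$ and its extrinsic geometry inside the ambient Euclidean space $\mathbb{R}^{n+p}$. Since $\mathbb{S}^{n+p-1}(1) \hookrightarrow \mathbb{R}^{n+p}$ is an isometric embedding, the composition $\mathcal{M}^{n} \hookrightarrow \mathbb{S}^{n+p-1}(1) \hookrightarrow \mathbb{R}^{n+p}$ realizes $\mathcal{M}^{n}$ as an isometric submanifold of Euclidean space, which is precisely the setting in which Theorem \ref{thm1.1} is available.

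The key computational step is to express the mean curvature $H$ of $\mathcal{M}^{n}$ in $\mathbb{R}^{n+p}$ in terms of $\overline{\textbf{H}}$. Using the Gauss formula twice, for tangent vectors $U,V$ on $\mathcal{M}^{n}$ one has the decomposition of the second fundamental forms
\begin{equation*}
h(U,V) \;=\; \overline{h}(U,V) \;-\; g(U,V)\,X,
\end{equation*}
where $X$ denotes the position vector and the term $-g(U,V)X$ comes from the second fundamental form of $\mathbb{S}^{n+p-1}(1)$ in $\mathbb{R}^{n+p}$ evaluated on vectors tangent to $\mathcal{M}^{n}$. Tracing over an orthonormal frame yields the mean curvature identity $\textbf{H} = \overline{\textbf{H}} - X$. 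Since $\overline{\textbf{H}}$ lies in the tangent space of $\mathbb{S}^{n+p-1}(1)$ and therefore is orthogonal to $X$, one obtains
\begin{equation*}
|\textbf{H}|^{2} \;=\; |\overline{\textbf{H}}|^{2} + 1, \qquad n^{2}H^{2} \;=\; n^{2}\bigl(|\overline{\textbf{H}}|^{2}+1\bigr).
\end{equation*}

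With this identity in hand, the constant $C_{1}$ appearing in Theorem \ref{thm1.1} reduces to
\begin{equation*}
C_{1} \;=\; \tfrac{1}{4}\inf_{\sigma\in\Pi}\max_{\Omega} n^{2}H^{2} \;=\; \tfrac{1}{4}\inf_{\sigma\in\Pi}\max_{\Omega} n^{2}\bigl(|\overline{\textbf{H}}|^{2}+1\bigr) \;=\; C_{2},
\end{equation*}
while $\widetilde{C}_{1} = \widetilde{C}_{2}$ trivially since the definitions in terms of $\max_{\Omega}|\nu^{\top}|_{g_{0}}$ coincide. Substituting these identifications into inequality \eqref{thm-1.1-ineq} produces the right-hand side of \eqref{Sub-Sph} up to the replacement of $\Lambda_{1}^{1/4}$ by $\Lambda_{i}^{1/4}$ in the mixed term, which is justified by the monotonicity $\Lambda_{1}\leq\Lambda_{i}$, weakening the bound in the stated direction.

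The plan therefore consists of three steps carried out in order: first, realize $\mathcal{M}^{n}$ as a submanifold of $\mathbb{R}^{n+p}$ via the spherical embedding; second, derive the mean-curvature identity $|\textbf{H}|^{2}=|\overline{\textbf{H}}|^{2}+1$ from the Gauss formula and the fact that $\overline{\textbf{H}}\perp X$; and third, invoke Theorem \ref{thm1.1} with the constants recomputed as above. No new analytic obstacle arises; the only subtle point is verifying the orthogonality $\overline{\textbf{H}}\perp X$, which I expect to be the most delicate step to present cleanly but is a routine consequence of $\overline{\textbf{H}}$ being tangent to the sphere and $X$ being normal to it.
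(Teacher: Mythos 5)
Your proposal is correct and follows essentially the same route as the paper: compose the immersion $\tau:\mathcal{M}^{n}\to\mathbb{S}^{n+p-1}(1)$ with the canonical embedding $\pi:\mathbb{S}^{n+p-1}(1)\to\mathbb{R}^{n+p}$, invoke the identity $|\textbf{H}|^{2}=|\overline{\textbf{H}}|^{2}+1$, and apply Theorem \ref{thm1.1} with the constants relabelled. Your only addition beyond what the paper writes is the explicit Gauss-formula derivation of the mean-curvature identity (the paper simply states it) and the remark that the passage from $\Lambda_{1}^{1/4}$ in \eqref{thm-1.1-ineq} to $\Lambda_{i}^{1/4}$ in \eqref{Sub-Sph} only weakens the bound, which is a fair reading of what appears to be an inconsistency in the paper's own statement of Theorem \ref{thm1.1}.
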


\begin{proof}
Clearly, there exists a canonical  imbedding map from the unit sphere to Euclidean space. We assume that $\pi: \mathbb{S}^{n+p-1}(1)\rightarrow \mathbb{R}^{n+p}$ is the canonical imbedding from the unit sphere $\mathbb{S}^{n+p-1}(1)$ into $\mathbb{R}^{n+p},$ and  $\tau: \mathcal{M}^{n}\rightarrow \mathbb{S} ^{n+p-1}(1)$ is an isometrical immersion.  In other words, we have
the following diagram:

\begin{equation*}\begin{aligned}
\xymatrix{
  \mathcal{M}^{n}\ar[dr]_{\pi\circ \tau} \ar[r]^{\tau}
                & \mathbb{S}^{n+p-1} \ar[d]^{\pi}  \\
                & \mathbb{R}^{n+p}              }\end{aligned} \end{equation*}Then, $\pi \circ \tau: \mathcal{M}^{n} \rightarrow \mathbb{R}^{n+p}$ is an isometric immersion from $\mathcal{M}^{n}$ to $\mathbb{R}^{n+p} .$
Let $\overline{\textbf{H}}$ and $\textbf{H}$ be the mean curvature vector fields of $\tau$ and $\pi \circ \tau,$ respectively. Then, we have
\[
\left| \textbf{H}\right|^{2}=|\overline{\textbf{H}}|^{2}+1.
\]
Applying  theorem \ref{thm1.1} directly, we can get \eqref{Sub-Sph}. Therefore, we finish the proof of corollary \ref{corr-6.2}.\end{proof}

In particular, we assume that $(\mathcal{M}^{n},g)$ is an $n$-dimensional unit sphere $\mathbb{S}^{n}(1)$, and then, $|\overline{\textbf{H}}|=0$, which implies that, $\left| \textbf{H}\right|=1$. Thus, according to corollary \ref{corr-6.2}, we can prove the following corollary.

\begin{corr}Let $(\mathcal{M}^{n},g)$ be an $n$-dimensional unit sphere $\mathbb{S}^{n}(1)$ and $\Omega$ is a bounded domain on  $\mathbb{S}^{n}(1)$. Then,
eigenvalues of the clamped plate problem \eqref{L-2-prob} of  the $\mathfrak{L}_{\nu}^{2}$ operator satisfy

\begin{equation*}\begin{aligned}
\sum_{i=1}^{k}\left(\Lambda_{k+1}-\Lambda_{i}\right)^{2}&\leq\frac{4}{n}\left\{\sum_{i=1}^{k}\left(\Lambda_{k+1}-\Lambda_{i}\right)^{2}\left[\left(\frac{n}{2}+1\right) \Lambda_{i}^{\frac{1}{2}}+4\Gamma^{\frac{1}{4}}_{1}C_{3}+\frac{n^{2}}{4}+4C_{3}^{2}\right]\right\}^{\frac{1}{2}}\\
&\quad\quad\times\left\{\sum_{i=1}^{k}\left(\Lambda_{k+1}-\Lambda_{i}\right)\left( \Lambda_{i}^{\frac{1}{2}}+4\Gamma^{\frac{1}{4}}_{1}C_{3}+\frac{n^{2}}{4}+4C_{3}^{2}\right)\right\}^{\frac{1}{2}},
\end{aligned}\end{equation*}
where $C_{3}$ is given by

$$
C_{3}=\frac{1}{4}\max_{\Omega}|\nu^{\top}|_{g_{0}}.
$$
\end{corr}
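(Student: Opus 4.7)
The plan is to derive the stated inequality as a direct specialization of Theorem~\ref{thm1.1} (equivalently, of Corollary~\ref{corr-6.2} with the ambient sphere taken to be the manifold itself). The only thing to verify is how the general constants $C_{1}$ and $\widetilde{C}_{1}$ appearing in Theorem~\ref{thm1.1} evaluate when $\mathcal{M}^{n}=\mathbb{S}^{n}(1)$, after which the result is a substitution.

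First I would choose the canonical isometric embedding $\iota:\mathbb{S}^{n}(1)\hookrightarrow\mathbb{R}^{n+1}$, which realizes the sphere as a hypersurface (so the codimension $p$ equals $1$). Under $\iota$, at every point of $\mathbb{S}^{n}(1)$ the mean curvature satisfies $|\textbf{H}|\equiv 1$; equivalently, if one prefers to invoke Corollary~\ref{corr-6.2} with the ambient sphere $\mathbb{S}^{n+p-1}(1)=\mathbb{S}^{n}(1)$, then $\overline{\textbf{H}}\equiv 0$ and so $|\textbf{H}|^{2}=|\overline{\textbf{H}}|^{2}+1=1$ gives the same number. Either way, $n^{2}H^{2}\equiv n^{2}$, so choosing this $\sigma\in\Pi$ yields $C_{1}\le \tfrac{n^{2}}{4}$ from the infimum defining $C_{1}$, and it suffices to use the value $\tfrac{n^{2}}{4}$ in the resulting inequality. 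The constant $\widetilde{C}_{1}=\tfrac{1}{4}\max_{\Omega}|\nu^{\top}|_{g_{0}}$ from Theorem~\ref{thm1.1} is, by inspection, exactly the quantity denoted $C_{3}$ in the statement.

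The last step is then a mechanical substitution into \eqref{thm-1.1-ineq}: the combination $4\widetilde{C}_{1}\Lambda_{1}^{1/4}+4\widetilde{C}_{1}^{2}+C_{1}$ becomes $4C_{3}\Lambda_{1}^{1/4}+4C_{3}^{2}+\tfrac{n^{2}}{4}$, which is precisely the expression appearing inside each of the two square-bracketed factors of the corollary. There is no genuine obstacle here, since the argument is a pure specialization: the geometric heart of the proof already lives inside Theorem~\ref{thm1.1} (and the key lemma of Section~\ref{sec2}), and passing from a generic Riemannian manifold to the round sphere only collapses the bound $\tfrac{1}{4}\inf_{\sigma}\max_{\Omega}(n^{2}H^{2})$ to the explicit value $\tfrac{n^{2}}{4}$ via the canonical embedding.
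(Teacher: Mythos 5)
Your proposal is correct and takes essentially the same route as the paper, which derives this result by specializing Corollary~\ref{corr-6.2} (with $\overline{\textbf{H}}\equiv 0$, hence $|\textbf{H}|\equiv 1$, so $n^{2}H^{2}\equiv n^{2}$) rather than Theorem~\ref{thm1.1} directly, though the computation is identical. Your reading of the undefined symbol $\Gamma_{1}^{1/4}$ in the statement as $\Lambda_{1}^{1/4}$ (matching the form in Theorem~\ref{thm1.1}) is the natural interpretation of what is evidently a typographical error.
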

$$\eqno\Box$$

In what follows, let us recall some important facts for the  submanifolds on the projective spaces. For more useful information, we refer to \cite{Chb,CL}. We assume that $\mathbb{F}$ is the field $\mathbb{R}$ of real numbers,
the field $\mathbb{C}$ of complex numbers or the field $\mathbb{Q}$ of quaternions. For convenience, we
introduce a notation as follows:

\begin{equation}\label{df}
d_{\mathbb{F}}=\operatorname{dim}_{\mathbb{R}}\mathbb{F}=\left\{\begin{array}{ll}
1, & \text { if } \mathbb{F} = \mathbb{R}; \\
2, & \text { if } \mathbb{F} = \mathbb{C}; \\
4, & \text { if } \mathbb{F} = \mathbb{Q}.
\end{array}\right.
\end{equation}Denote by $\mathbb{F}P^{m}$ the $m$-dimensional real projective space if $\mathbb{F}= \mathbb{R} ,$ the complex projective space of real dimension $2 m$ if $\mathbb{F}= \mathbb{C}$, and the quaternionic projective space of real dimension $4 m$ if $\mathbb{F}= \mathbb{Q}$, respectively. Here, the manifold $\mathbb{F}P^{m}$ carries a natural metric so that the Hopf fibration $\pi: \mathbb{S}^{d_{\mathbb{F}} \cdot(m+1)-1} \subset \mathbb{F}^{m+1} \rightarrow \mathbb{F}P^{m}$ is a Riemannian fibration.
Let $\mathcal{H}_{m+1}(\mathbb{F})=\left\{A \in \mathcal{M}^{n} _{m+1}(\mathbb{F}) \mid A^{*}:=\overline{^{t} A}=A\right\}$ be the vector space of $(m+1) \times(m+1)$ Hermitian
matrices with coefficients in the field $\mathbb{F}$. We can endow $\mathcal{H}_{m+1}( \mathbb{F})$ with the inner product
\[
\langle A, B\rangle=\frac{1}{2} \operatorname{tr}(A B),
\]
where tr $(\cdot)$ denotes the trace for the given $(m+1) \times(m+1)$ matrix. Clearly, the map $\pi: \mathbb{S} ^{d_{\mathbb{F}} \cdot(m+1)-1} \subset \mathbb{F} ^{m+1} \rightarrow$
$\mathcal{H}_{m+1}(\mathbb{F})$ given by
\[
\pi(\textbf{z})=\textbf{z}\textbf{z}^{\ast}=\left(\begin{array}{llll}
\left|z_{0}\right|^{2} & z_{0} \overline{z_{1}} & \cdots & z_{0} \overline{z_{m}} \\
z_{1} \overline{z_{0}} & \left|z_{1}\right|^{2} & \cdots & z_{1} \overline{z_{m}} \\
\cdots & \cdots & \cdots & \cdots \\
z_{m} \overline{z_{0}} & z_{m} \overline{z_{1}} & \cdots & \left|z_{m}\right|^{2}
\end{array}\right)
\]
induces through the Hopf fibration an isometric embedding $\pi$ from $\mathbb{F}P^{m}$ into $\mathcal{H}_{m+1}( \mathbb{F}),$ where $\textbf{z}=(z_{0},z_{1},\cdots,z_{m})\in\mathbb{S}^{d_{\mathbb{F}} \cdot(m+1)-1}$. Moreover, $\pi\left( \mathbb{F}P^{m}\right)$ is a minimal submanifold of the hypersphere $\mathbb{S} \left(\frac{I}{m+1}, \sqrt{\frac{m}{2(m+1)}}\right)$ of $\mathcal{H}_{m+1}( \mathbb{F})$ with radius $\sqrt{\frac{m}{2(m+1)}}$ and
center $\frac{I}{m+1}$, where $I$ denotes the identity matrix.
In addition, we need the follow result (cf. lemma 6.3 in Chapter 4 in \cite{Chb}):

\begin{lem} \label{lem-proj}Let $\rho: \mathcal{M}^{n}  \rightarrow \mathbb{F} P^{\text {m }}$ be an isometric immersion, and let $\widehat{\textbf{H}}$ and $\textbf{H}$ be the mean curvature
vector fields of the immersions $\rho$ and $\pi \circ \rho,$ respectively (here $\pi$ is the induced isometric embedding $\pi$ from $\mathbb{F}P^{m}$ into $\mathcal{H}_{m+1}( \mathbb{F})$ explained above). Then, we have
\[
\left| \textbf{H}\right|^{2}=|\widehat{\textbf{H}}|^{2}+\frac{4(n+2)}{3 n}+\frac{2}{3 n^{2}} \sum_{i \neq j} K\left(e_{i}, e_{j}\right),
\]
where $\left\{e_{i}\right\}_{i=1}^{n}$ is a local orthonormal basis of $\Gamma(T \mathcal{M}^{n})$ and $K$ is the sectional curvature of $\mathbb{F}P^{m}$ expressed $b y$
\[
K\left(e_{i}, e_{j}\right)=\left\{\begin{array}{ll}
1, & \text { if } \mathbb{F} = \mathbb{R}; \\
1+3\left(e_{i} \cdot J e_{j}\right)^{2}, & \text { if } \mathbb{F} = \mathbb{C}; \\
1+\sum_{r=1}^{3} 3\left(e_{i} \cdot J_{r} e_{j}\right)^{2}, & \text { if } \mathbb{F} = \mathbb{Q},
\end{array}\right.
\]
where $J$ is the complex structure of $\mathbb{C}P^{m}$ and $J_{r}$ is the quaternionic structure of $\mathbb{Q}P ^{m}$.\end{lem}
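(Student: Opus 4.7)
The plan is to exploit the well-known compositional structure of the second fundamental form under the two-step immersion $\mathcal{M}^{n} \xrightarrow{\rho} \mathbb{F}P^{m} \xrightarrow{\pi} \mathcal{H}_{m+1}(\mathbb{F})$ and to combine it with the explicit geometry of the Veronese-type embedding $\pi$. Concretely, for tangent vectors $X,Y \in T\mathcal{M}^{n}$, the second fundamental form of the composition satisfies
\begin{equation*}
B_{\pi\circ\rho}(X,Y) = d\pi\bigl(B_{\rho}(X,Y)\bigr) + B_{\pi}\bigl(d\rho(X), d\rho(Y)\bigr).
\end{equation*}
Tracing over an orthonormal basis $\{e_{i}\}_{i=1}^{n}$ of $T\mathcal{M}^{n}$ and dividing by $n$ yields
\begin{equation*}
n\textbf{H} \;=\; d\pi\bigl(n\widehat{\textbf{H}}\bigr) \;+\; \sum_{i=1}^{n} B_{\pi}\bigl(d\rho(e_{i}), d\rho(e_{i})\bigr).
\end{equation*}

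Next, because $\pi$ is an isometric embedding, $d\pi$ preserves inner products, and the term $B_{\pi}(\cdot,\cdot)$ takes values in the normal bundle of $\pi(\mathbb{F}P^{m})$ in $\mathcal{H}_{m+1}(\mathbb{F})$, which is orthogonal to $d\pi(T\mathbb{F}P^{m})$. Hence squaring and using this orthogonal decomposition will give
\begin{equation*}
n^{2}|\textbf{H}|^{2} \;=\; n^{2}|\widehat{\textbf{H}}|^{2} \;+\; \Bigl|\sum_{i=1}^{n} B_{\pi}(e_{i},e_{i})\Bigr|^{2},
\end{equation*}
so the entire problem reduces to computing the trace-squared norm of $B_{\pi}$ restricted to an orthonormal $n$-frame in $T\mathbb{F}P^{m}$.

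For this last step I would invoke the explicit structure of the standard (first-standard) minimal isometric embedding $\pi: \mathbb{F}P^{m} \hookrightarrow \mathcal{H}_{m+1}(\mathbb{F})$ into the hypersphere of $\mathcal{H}_{m+1}(\mathbb{F})$ of radius $\sqrt{m/(2(m+1))}$. The normal part of $B_{\pi}(X,Y)$ in $\mathcal{H}_{m+1}(\mathbb{F})$ splits into two orthogonal contributions: (i) the radial contribution arising because $\pi(\mathbb{F}P^{m})$ lies in a hypersphere of $\mathcal{H}_{m+1}(\mathbb{F})$, which contributes a diagonal piece proportional to the induced metric, and (ii) the intrinsic contribution along the directions tangent to the hypersphere but normal to $\pi(\mathbb{F}P^{m})$. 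The Gauss equation applied to $\pi$ then rewrites the second piece in terms of the sectional curvature $K$ of $\mathbb{F}P^{m}$:
\begin{equation*}
\langle B_{\pi}(e_{i},e_{i}), B_{\pi}(e_{j},e_{j})\rangle - |B_{\pi}(e_{i},e_{j})|^{2} \;=\; K(e_{i},e_{j}) - \overline{K}(e_{i},e_{j}),
\end{equation*}
where $\overline{K}$ is the constant curvature of the ambient hypersphere. A bookkeeping of the two contributions, combined with the standard identity for the Veronese-type embedding, will produce the constant $\tfrac{4(n+2)}{3n}$ (capturing the radial and metric-trace pieces) and the term $\tfrac{2}{3n^{2}}\sum_{i\neq j} K(e_{i},e_{j})$ (capturing the Gauss-equation trace).

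The main obstacle is the bookkeeping for $|\sum_{i} B_{\pi}(e_{i},e_{i})|^{2}$: one must carefully separate the radial piece of $B_{\pi}$ (forced by the hyperspherical ambient) from its tangent-to-hypersphere piece, then apply the Gauss equation over each unordered pair $(i,j)$ with $i \neq j$. The sectional curvature formula
\begin{equation*}
K(e_{i},e_{j}) = \begin{cases} 1 & \mathbb{F}=\mathbb{R}, \\ 1+3(e_{i}\cdot Je_{j})^{2} & \mathbb{F}=\mathbb{C}, \\ 1+\sum_{r=1}^{3} 3(e_{i}\cdot J_{r}e_{j})^{2} & \mathbb{F}=\mathbb{Q},\end{cases}
\end{equation*}
then falls out of the explicit expression of $B_{\pi}$ in terms of the complex (respectively quaternionic) structure tensors on $\mathcal{H}_{m+1}(\mathbb{F})$. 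Once these traces are assembled, rearranging yields exactly the claimed identity. Alternatively, since the result appears as Lemma 6.3 of Chapter 4 of \cite{Chb}, one may simply invoke it and content oneself with pointing out that it is a direct consequence of the compositional formula above applied to the first-standard embedding of $\mathbb{F}P^{m}$.
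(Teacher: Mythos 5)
The paper does not actually prove this lemma: it only cites it as Lemma~6.3 of Chapter~4 in Chen's book \cite{Chb} and moves on, so your fallback option ("simply invoke it") is exactly what the paper does. Your from-scratch sketch is the standard route and is in the right shape---compose the second fundamental forms, square, use that $d\pi$ is an isometry so the cross term carries over cleanly, and then compute the trace of $B_\pi$ using the sphere-radial decomposition and the Gauss equation---but two bookkeeping points need care if one fleshes it out: the ambient for $\pi$ is the flat Euclidean space $\mathcal{H}_{m+1}(\mathbb{F})$, so the Gauss equation in your display only holds if $B_\pi$ there is understood as the second fundamental form of $\pi$ \emph{into the hypersphere}, not into $\mathcal{H}_{m+1}(\mathbb{F})$ (as written the notation is ambiguous); and the hypersphere is centered at $I/(m+1)$, not the origin, so the radial direction is $\pi(z)-I/(m+1)$ rather than the position vector itself. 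Beyond that, the Gauss equation alone only controls the combination $\langle B_\pi(e_i,e_i),B_\pi(e_j,e_j)\rangle - |B_\pi(e_i,e_j)|^2$, so one still needs the explicit second fundamental form of the Veronese-type embedding to isolate $\bigl|\sum_i B_\pi(e_i,e_i)\bigr|^2$; you gesture at this as "the standard identity," which is fine for a sketch, but it is precisely the computational content that Chen's Lemma~6.3 supplies, and the clean move (which the paper takes) is simply to cite it.
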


$$\eqno\Box$$

Therefore, one can infer from lemma \ref{lem-proj} that

\begin{equation}\label{H-3H}
\left|\textbf{H}\right|^{2}=\left\{\begin{array}{ll}
|\widehat{\textbf{H}}|^{2}+\frac{2(n+1)}{2 n}, & \text { for } \mathbb{R} P^{m}; \\
|\widehat{\textbf{H}}|^{2}+\frac{2(n+1)}{2 n}+\frac{2}{n^{2}} \sum_{i, j=1}^{n}\left(e_{i} \cdot J e_{j}\right)^{2} \leq|\widehat{\textbf{H}}|^{2}+\frac{2(n+2)}{n}, & \text { for } \mathbb{C} P^{m}; \\
|\widehat{\textbf{H}}|^{2}+\frac{2(n+1)}{2 n}+\frac{2}{n^{2}} \sum_{i, j=1}^{n} \sum_{r=1}^{3}\left(e_{i} \cdot J_{r} e_{j}\right)^{2} \leq|\widehat{\textbf{H}}|^{2}+\frac{2(n+4)}{n}, & \text { for } \mathbb{Q} P^{m}.
\end{array}\right.
\end{equation}Hence, it follows from \eqref{H-3H} that,

\begin{equation}\label{HH}
\left| \textbf{H}\right|^{2} \leq|\widehat{\textbf{H}}|^{2}+\frac{2\left(n+d_{\mathbb{F}}\right)}{n}.
\end{equation}
We note that the equality in \eqref{HH} holds if and only if $\mathcal{M}^{n}$ is a complex submanifold of $\mathbb{C}P^{m}$ (for the case $\mathbb{C}P^{m}$ ) while $n \equiv 0(\bmod 4)$ and $\mathcal{M}^{n}$ is an invariant submanifold of $\mathbb{Q}P^{m}\left(\text { for the case } \mathbb{Q}P^{m}\right)$.   We use $\widehat{\Pi}$ to denote the set of all isometric immersions from $\mathcal{M}^{n}$ into a projective space $\mathbb{F}P^{m}$. Then, applying theorem \ref{thm1.1}, we can prove the following corollary.

\begin{corr}\label{corr-6.4}
If $\mathcal{M}^{n}$ is isometrically immersed in a projective space $\mathbb{F}P^{m}$ with mean curvature vector $\widehat{\textbf{H}}$, Then,
eigenvalues of the clamped plate problem \eqref{L-2-prob} of  the $\mathfrak{L}_{\nu}^{2}$ operator satisfy

\begin{equation}\label{proj-inequa}
\begin{aligned}
\sum_{i=1}^{k}\left(\Lambda_{k+1}-\Lambda_{i}\right)^{2} &\leq \frac{4}{n}\left\{\sum_{i=1}^{k}\left(\Lambda_{k+1}-\Lambda_{i}\right)^{2}\left[\left( \frac{n}{2}+1\right) \Lambda_{i}^{\frac{1}{2}}+4\widetilde{C}_{4} \Lambda_{i}^{\frac{1}{4}}+4\widetilde{C}_{4}^{2}+C_{4}\right]\right\}^{\frac{1}{2}}\\&\quad\quad\times
\left\{\sum_{i=1}^{k}\left(\Lambda_{k+1}-\Lambda_{i}\right)  \left(\Lambda_{i}^{\frac{1}{2}}+ 4\widetilde{C} _{4} \Lambda_{i}^{\frac{1}{4}}+4\widetilde{C}_{4}^{2}+C_{4}\right)
\right\}^{\frac{1}{2}},
\end{aligned}
\end{equation}
where
where $C_{4}$ is given by

$$
C_{4}=\frac{1}{4}\inf _{\widehat{\sigma} \in \widehat{\Pi}}\max_{\Omega}\left(n^{2}H^{2}+2n\left(n+d_{\mathbb{F} }\right)\right),
$$  and $\widetilde{C}_{4}$ is given by $$\widetilde{C}_{4}=\frac{1}{4}\max_{\Omega} |\nu^{\top}|_{g_{0}},$$ and
$d_{\mathbb{F}}=\operatorname{dim}_{ \mathbb{R} } \mathbb{F}$ defined by \eqref{df}.

\end{corr}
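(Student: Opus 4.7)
The plan is to reduce Corollary~\ref{corr-6.4} to a direct application of Theorem~\ref{thm1.1} by using the canonical isometric embedding of the projective space into a Euclidean space. Recall from the discussion preceding Lemma~\ref{lem-proj} that the map $\pi:\mathbb{F}P^{m}\hookrightarrow \mathcal{H}_{m+1}(\mathbb{F})$ induced by the Hopf fibration is an isometric embedding into the real vector space of Hermitian matrices endowed with the normalized trace inner product, which is a Euclidean space of some real dimension $N$. Hence, for any isometric immersion $\widehat{\sigma}=\rho\in\widehat{\Pi}$, the composition $\pi\circ\rho:\mathcal{M}^{n}\to\mathcal{H}_{m+1}(\mathbb{F})\cong\mathbb{R}^{N}$ is an isometric immersion, so it belongs to $\Pi$ in the sense required by Theorem~\ref{thm1.1}.

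Under this identification, the eigenvalue inequality \eqref{thm-1.1-ineq} holds with $H=|\textbf{H}|$ denoting the mean curvature of $\pi\circ\rho$. The next step is to pass from $H$ to the intrinsic projective mean curvature $|\widehat{\textbf{H}}|$ of $\rho$. This is provided precisely by Lemma~\ref{lem-proj} together with the uniform bound \eqref{HH}:
\begin{equation*}
\bigl|\textbf{H}\bigr|^{2}\leq \bigl|\widehat{\textbf{H}}\bigr|^{2}+\frac{2(n+d_{\mathbb{F}})}{n},
\end{equation*}
which, after multiplying by $n^{2}$ and taking $\max_{\Omega}$ followed by $\inf_{\widehat{\sigma}\in\widehat{\Pi}}$, yields
\begin{equation*}
\tfrac{1}{4}\inf_{\sigma\in\Pi}\max_{\Omega}\bigl(n^{2}H^{2}\bigr)\;\leq\;\tfrac{1}{4}\inf_{\widehat{\sigma}\in\widehat{\Pi}}\max_{\Omega}\bigl(n^{2}|\widehat{\textbf{H}}|^{2}+2n(n+d_{\mathbb{F}})\bigr)=C_{4}.
\end{equation*}
The constant $\widetilde{C}_{4}=\tfrac{1}{4}\max_{\Omega}|\nu^{\top}|_{g_{0}}$ coincides formally with $\widetilde{C}_{1}$, since the tangential component of $\nu$ along $\mathcal{M}^{n}$ depends only on the submanifold and not on the choice of ambient target. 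Feeding these upper bounds for $C_{1}$ and $\widetilde{C}_{1}$ into \eqref{thm-1.1-ineq} immediately delivers \eqref{proj-inequa}.

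The only nontrivial input is the estimate \eqref{HH}, which is a short case-by-case calculation from Lemma~\ref{lem-proj}: in the real case one has $K(e_{i},e_{j})=1$ exactly, while in the complex and quaternionic cases one invokes the pointwise bounds $(e_{i}\cdot Je_{j})^{2}\leq 1$ and $\sum_{r=1}^{3}(e_{i}\cdot J_{r}e_{j})^{2}\leq 1$, respectively, combined with the trivial count $\sum_{i\neq j}1=n(n-1)$. Apart from this geometric input, the argument is mechanical, so the main analytic work is already absorbed into Theorem~\ref{thm1.1} and Lemma~\ref{lem-proj}; I anticipate no essential obstacle beyond correctly tracking the constants through the substitution.
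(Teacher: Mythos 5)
Your argument is exactly the paper's: compose the immersion $\rho\in\widehat{\Pi}$ with the canonical isometric embedding $\pi:\mathbb{F}P^{m}\hookrightarrow\mathcal{H}_{m+1}(\mathbb{F})$ to land in a Euclidean space, invoke Theorem~\ref{thm1.1}, and then use the mean-curvature relation \eqref{HH} derived from Lemma~\ref{lem-proj} to replace $n^{2}H^{2}$ of $\pi\circ\rho$ by $n^{2}|\widehat{\textbf{H}}|^{2}+2n(n+d_{\mathbb{F}})$, which gives $C_{4}$. Your bookkeeping of constants matches the paper's, and you correctly note that increasing the constant only weakens the right-hand side, so the substitution $C_{1}\le C_{4}$ preserves the inequality; the proof is correct and follows the same route.
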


\begin{proof}
Since there is a canonical imbedding from $\mathbb{F} P^{m}( \mathbb{F} = \mathbb{R} , \mathbb{C} , \mathbb{Q} )$ to Euclidean space $\mathcal{H}_{m+1}( \mathbb{F} )$, then for compact manifold $\mathcal{M}^{n}$ isometrically immersed into the projective space $\mathbb{F} P^{m},$ we have the following diagram:

\begin{equation*}\begin{aligned}
\xymatrix{
  \mathcal{M}^{n}\ar[dr]_{\pi\circ \rho} \ar[r]^{\rho}
                & \mathbb{F}P^{m} \ar[d]^{\pi}  \\
                &\mathcal{H}_{m+1}(\mathbb{F})             }\end{aligned} \end{equation*}
where $\pi: \mathbb{F}P^{m} \rightarrow \mathcal{H}_{m+1}( \mathbb{F})$ denotes the canonical imbedding from $\mathbb{F}P^{m}$ into $\mathcal{H}_{m+1}( \mathbb{F}),$ and   $\rho: \mathcal{M}^{n} \rightarrow$
$\mathbb{F}P^{m}$ denotes  an isometric immersion from $\mathcal{M}^{n}$ to $\mathbb{F}P^{m}$. Then, $\pi \circ \rho: \mathcal{M}^{n} \rightarrow \mathcal{H}_{m+1}( \mathbb{F})$ is an isometric immersion from $\mathcal{M}^{n}$ to $\mathcal{H}_{m+1}( \mathbb{F})$. Applying \eqref{HH} and theorem \ref{thm1.1}, one can get \eqref{proj-inequa}. Thus, it completes the proof of corollary \ref{corr-6.4}.

\end{proof}

\end{document}